\definecolor{string}{rgb}{0.7,0.0,0.0}
\definecolor{comment}{rgb}{0.13,0.54,0.13}
\newcommand{\vare}{\varepsilon}
\newcommand{\be}{\begin{equation}\fl}
\newcommand{\ee}{\end{equation}}
\newtheorem{theorem}{Theorem}[section]
\newtheorem{lemma}[theorem]{Lemma}
\newtheorem{proposition}[theorem]{Proposition}
\newtheorem{conjecture}[theorem]{Conjecture}
\newtheorem{remark}{Remark} 
\newenvironment{proof}[1][Proof]{\begin{trivlist}
\item[\hskip \labelsep {\bfseries #1}]}{\end{trivlist}}
\newenvironment{definition}[1][Definition]{\begin{trivlist}
\item[\hskip \labelsep {\bfseries #1}]}{\end{trivlist}}
\newcommand*{\qed}{\hfill\ensuremath{\Box}}%
\begin{document}
\title[Singular limit analysis of a model for earthquake faulting]{Singular limit analysis of a model for earthquake faulting}
\author{Elena Bossolini, Morten Br\o ns and Kristian Uldall Kristiansen}
\address{ Department of Applied Mathematics and Computer Science, Technical University of Denmark,
Kongens Lyngby 2800, DK}
\ead{\mailto{ebos@dtu.dk}, \mailto{mobr@dtu.dk}, \mailto{krkri@dtu.dk}}
\begin{abstract}
In this paper we consider a one dimensional spring-block model  describing earthquake faulting. By using geometric singular perturbation theory and the blow-up method  we provide a detailed description of the periodicity of the earthquake episodes. In particular we show that the limit cycles arise from a degenerate Hopf bifurcation whose degeneracy is due to an underlying Hamiltonian structure that leads to large amplitude oscillations. We use a Poincar\'e compactification  to study the system  near infinity. At infinity the critical manifold loses hyperbolicity with an exponential rate. We use an adaptation of the blow-up method to recover the hyperbolicity.  This enables the identification of a new attracting manifold that organises the dynamics at infinity. This in turn leads to the formulation of a  conjecture on the behaviour of the limit cycles as the time-scale separation increases. We illustrate our findings with numerics and suggest an outline of the  proof of this conjecture.
\end{abstract}
\pacs{ }

\noindent{\it Keywords\/}: singular perturbation; Hamiltonian systems; rate and state friction; blow-up; earthquake dynamics; Poincar\'e compactification\\
\submitto{\NL}
\maketitle

\section{Introduction}\label{sec:introduction}
Earthquake events are a non-linear multi-scale phenomenon. Some of the non-linear occurrences  are fracture healing, repeating behaviour and memory effects  \cite{Ruina1983,heaton1990a,vidale1994a,Marone1998}. In this paper we focus   on the repeating behaviour of the earthquake cycles, where a cycle is defined as the combination of a rupture event with a following healing phase. 
An earthquake rupture consists of the instantaneous slipping of a fault side relative to the other side.  The healing phase allows the fault to strengthen  again and this process evolves on a longer time scale than the rupture event \cite{carlson1989a,marone1998a}. 

 The repetition of the earthquake events is significant for the predictability of earthquake hazards. The data collected in the Parkfield experiment in California show evidence of recurring  micro-earthquakes \cite{Nadeau1999,Marone1995,bizzarri2010a,Zechar2012}. For large earthquakes it is harder to detect a repeating pattern from the data, even though recent works indicate the presence of recurring cycles \cite{Ben-zion2008}.  

The one dimensional spring-block model together with the empirical Ruina friction law  is a fundamental model to describe earthquake dynamics \cite{Burridge1967,Ruina1983,rice1983a,gu1984a,rice1986a,carlson1991a,belardinelli1996a,fan2014a}. Although  the model does not represent all the non-linear phenomena of an  earthquake rupture, it still reproduces the essential properties of the  fault dynamics as extrapolated from experiments on rocks. The dimensionless form of the model is:
\be\label{eq:3Dproblem}
\begin{aligned}
\dot{x} &= -\rme^{z}\left(x +(1+\alpha) z\right),\\
\dot{y} &= \rme^{z} -1,\\
\vare \dot{z} &= -\rme^{-z}\left(y + \frac{x+z}{\xi} \right).
\end{aligned}\ee  
Numerically, it has been observed that  \eqref{eq:3Dproblem} has periodic solutions corresponding to the recurrence of the earthquake episodes, as shown in Figure \ref{fig:3dnumerics}  for two different values of the parameter $\vare$ and $\alpha>\xi$  fixed. The steep growth of the $y$-coordinate corresponds to the earthquake rupture, while the slow decay corresponds to the healing phase.  Hence the periodic solutions of  \eqref{eq:3Dproblem} have a multiple time-scale dynamics. Furthermore in  Figure \ref{fig:3dnumerics} we observe that the amplitude of the oscillations  increases for decreasing values of the time-scale separation $\vare$. For these reasons extensive numerical simulations are difficult to perform  in the relevant parameter range, that is $\vare \in [10^{-24}, 10^{-8}]$  \cite{rice1986a,carlson1989a,madariaga1996a,lapusta2000a,Erickson2008,Erickson2011}.    
\begin{figure}[t!]
	\centering
	\begin{subfigure}{.47\textwidth}\caption{ }\label{fig:3dbehaviour}
  \centering
  \includegraphics[width=\linewidth]{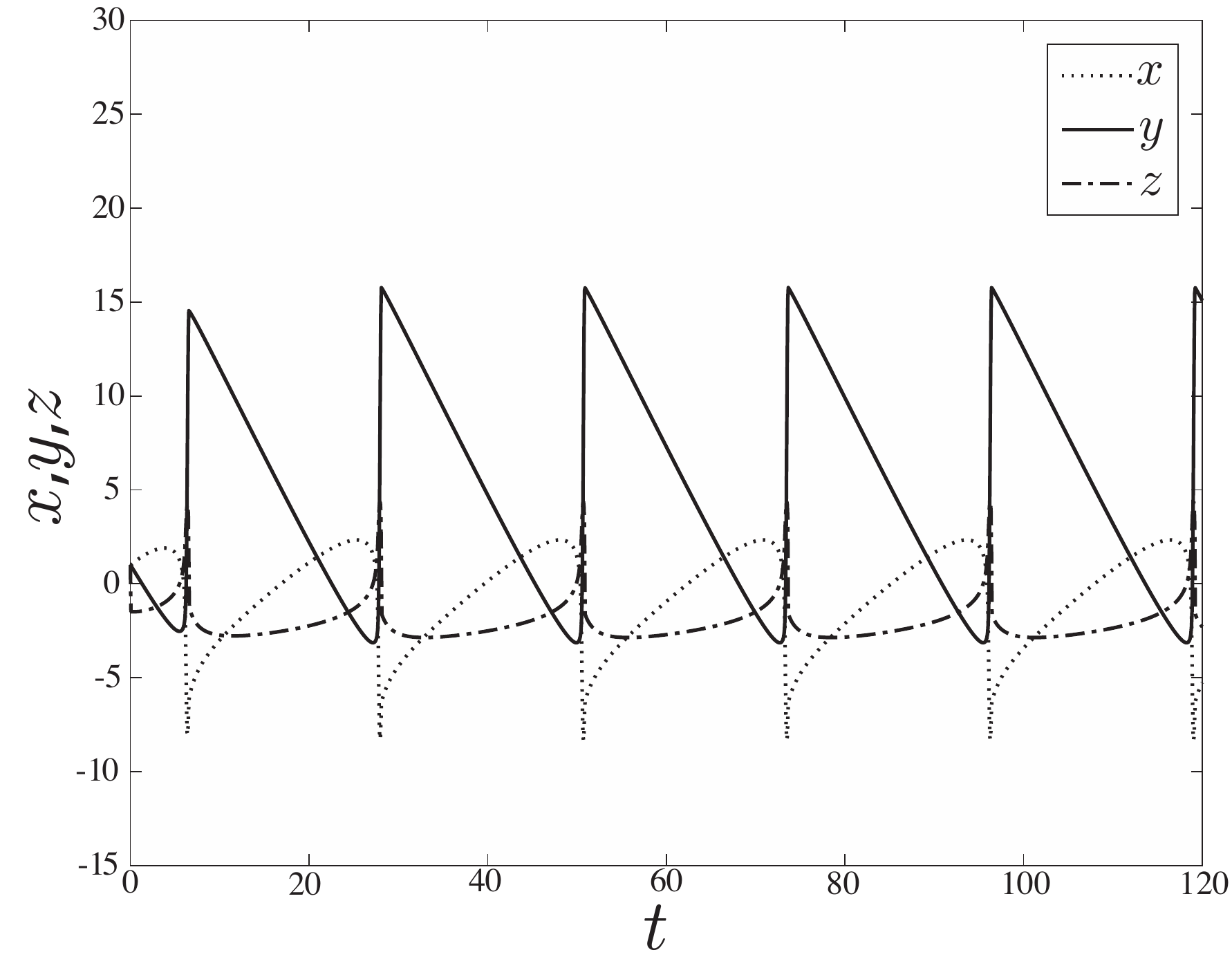}\end{subfigure}
\begin{subfigure}{.47\textwidth}\caption{ }\label{fig:3dbehaviour_eps}
  \centering
  \includegraphics[width=\linewidth]{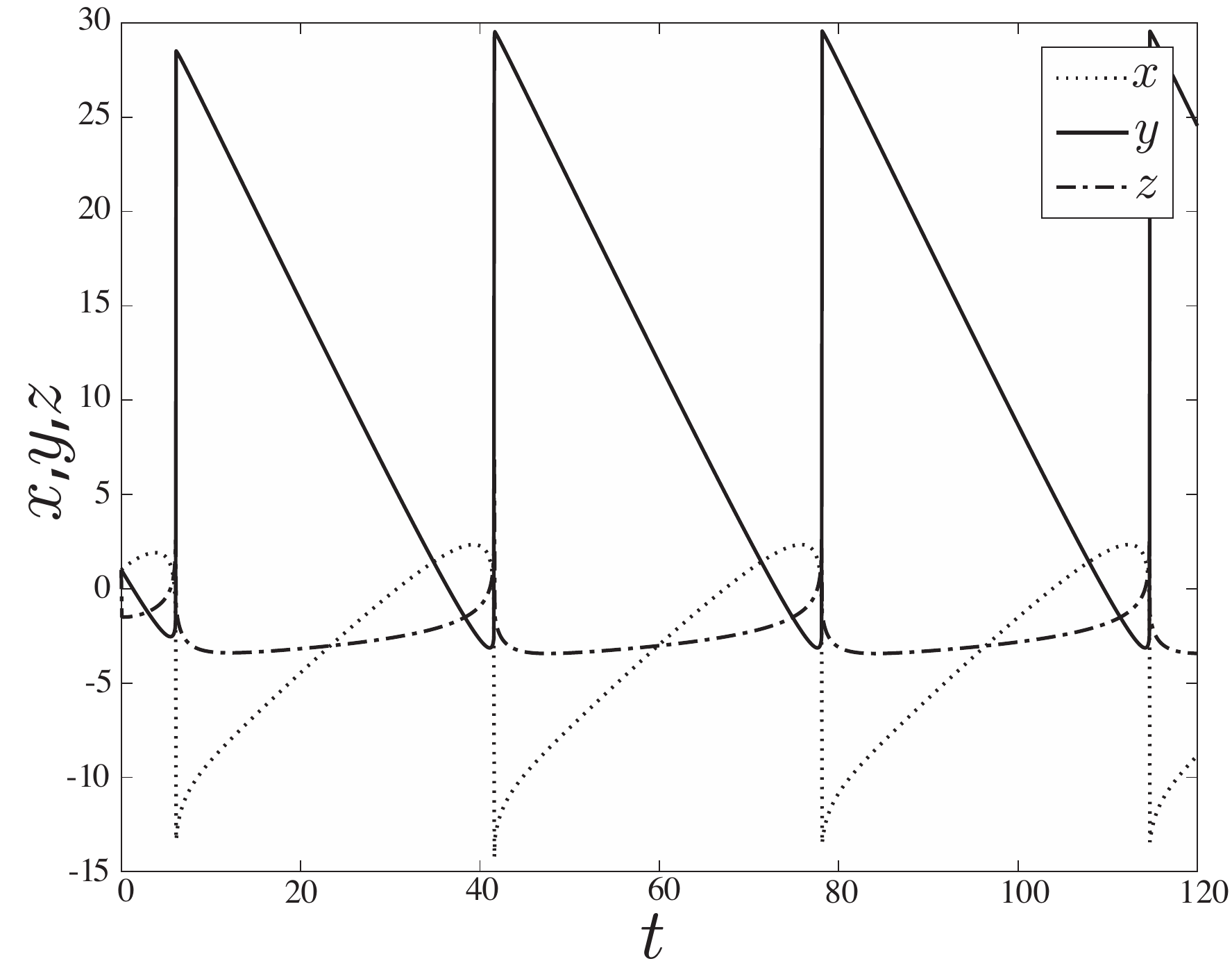}\end{subfigure}
  \begin{subfigure}{.63\textwidth}\caption{ }\label{fig:3d_plot_introduction}
  \centering
  \includegraphics[width=\linewidth]{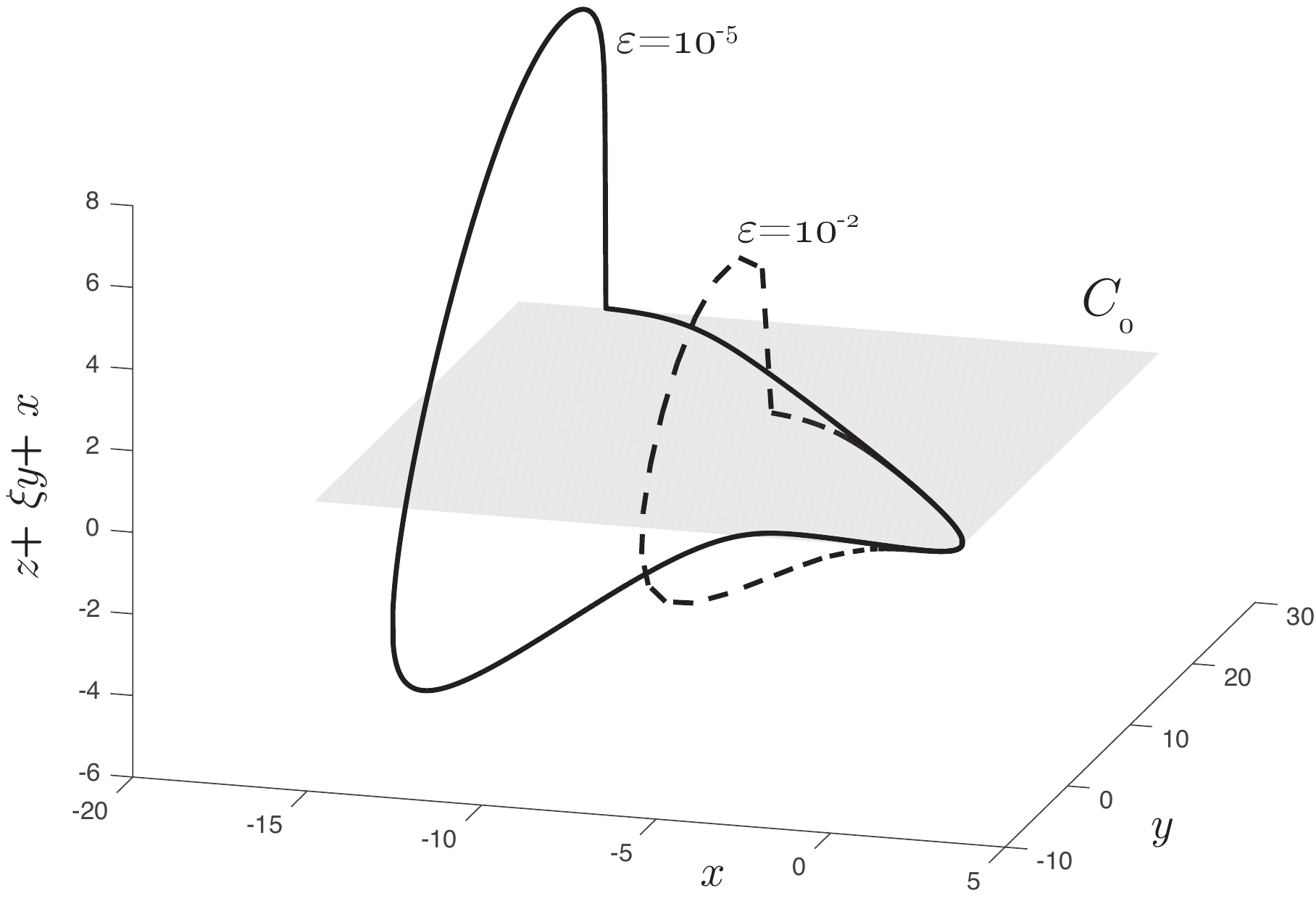}\end{subfigure}
  \caption{Numerical simulations of \eqref{eq:3Dproblem} for  $\alpha=0.9$ and $\xi=0.5$. $\vare=10^{-2}$ in \subref{fig:3dbehaviour}  while $\vare=10^{-5}$ in \subref{fig:3dbehaviour_eps}. In \subref{fig:3d_plot_introduction}    phase space of both simulations. The grey plane $C_0$ and the coordinate $z+\xi y + x$  are clarified  in section \ref{sec:GSPT}. }
	\label{fig:3dnumerics}
\end{figure}
We remark  that the periodic solutions of  \eqref{eq:3Dproblem} appear in a finite interval of values of $\alpha>\xi$. If $\alpha$ is much larger than $\xi$ then chaotic  dynamics emerges, as documented by \citename{Erickson2008} \citeyear{Erickson2008}.\\
 It is the purpose of the present paper to initiate a rigorous mathematical study of \eqref{eq:3Dproblem}  as a singular perturbation problem \cite{Jones1995,Kaper1999}.  At the singular limit $\vare=0$  we find an unbounded \textit{singular cycle} when $\alpha>\xi$. For $\vare>0$ we conjecture this cycle to perturb into a stable, finite amplitude limit cycle that explains the behaviour of Figure \ref{fig:3dnumerics}.  In this way we can predict the periodic solutions of \eqref{eq:3Dproblem} even in parameter regions that are not possible to explore numerically.  We expect that the deeper understanding of \eqref{eq:3Dproblem} that we provide, together with the techniques that we introduce, can be of  help to study the continuum formulation of the Burridge and Knopoff model, in particular regarding the analysis of the Heaton pulses \cite{heaton1990a}.

 As we will see in section \ref{sec:GSPT}, in our analysis the critical manifold loses normal hyperbolicity at infinity with an exponential rate. This is a non-standard loss of hyperbolicity  that also appears  in other  problems  \cite{rankin2011a}. To deal with this issue we will first introduce a compactification of the phase space with the Poincar\'e sphere \cite{chicone2006a} and  repeatedly  use the blow-up method of \citename{dumortier1996a} \citeyear{dumortier1996a} in the version of \citename{Krupa2001} \citeyear{Krupa2001}. In particular we will use a  technique that has been recently analyzed in \cite{Kristiansen2015a}. For an  introduction to the blow-up method we refer to \cite{Kuehn2015}.

Another way to study system  \eqref{eq:3Dproblem} when $\vare\ll1$ is by using the method of matched asymptotic expansions, see  \cite{eckhaus1973a} for an introduction.  \citename{putelat2008a} \citeyear{putelat2008a} have done the matching of the different time scales of \eqref{eq:3Dproblem} with an energy conservation argument, while in  \cite{pomeau2011critical} the causes of the  switch between the two different time scales are not studied.  However, the relaxation oscillation behavior  of the periodic solutions of  \eqref{eq:3Dproblem} is not explained.

Our paper is structured as follows. In section \ref{sec:model} we briefly discuss the physics of system \eqref{eq:3Dproblem}. In section \ref{sec:GSPT} we set \eqref{eq:3Dproblem} in the formalism of geometric singular perturbation theory and in section \ref{sec:singular_analysis} we consider the analysis of the reduced problem for $\alpha=\xi$ and $\vare=0$. Here a degenerate Hopf bifurcation appears whose degeneracy is due to an underlying Hamiltonian structure that we identify. We derive a bifurcation diagram in section \ref{sec:compact_reduced}   after having introduced a  compactification of the reduced problem. From this and from the analysis of section \ref{sec:perturbed_reduced}, we  conclude that the limit cycles of Figure \ref{fig:3dnumerics} cannot be described by the sole analysis of the reduced problem. In   section \ref{sec:infinity} we  define a candidate  \textit{singular cycle} $\Gamma_0$ that is used in our main result,  Conjecture \ref{con:conjecture_thm}. This conjecture is on the existence of limit cycles $\Gamma_\vare\to\Gamma_0$ for $\vare\ll1$. The conjecture is  supported by numerical simulations but in sections  \ref{sec:proof} and \ref{sec:global_results} we also lay out the foundation of a proof by using the blow-up method to gain  hyperbolicity of  $\Gamma_0$.     Finally in section \ref{sec:conclusion} we conclude and summarize the results of our analysis.



\section{Model}\label{sec:model}
The one dimensional spring-block model is presented in Figure \ref{fig:Burridge_Knopoff_model}. We suppose that one fault side slides at a constant velocity $v_0$ and drags the other fault side of mass $M$ through a spring of stiffness $\kappa$. The friction force $F_{\mu} = \sigma\mu$ acts against the motion. A common assumption is to suppose that the normal stress $\sigma$, i.e. the stress normal to the friction interface  \cite{Nakatani2001}, is constant $\sigma=1$. The friction coefficient $\mu$ is modelled with the Ruina rate and state friction law  $\mu=\mu(v,\theta)$, with $v$ the sliding velocity and $\theta$ the {\it state} variable. The state $\theta$ accounts for how long the  two surfaces have been in contact \cite{Ruina1983,Marone1998}.
\begin{figure}[t!]
\centering
\includegraphics[scale=0.9]{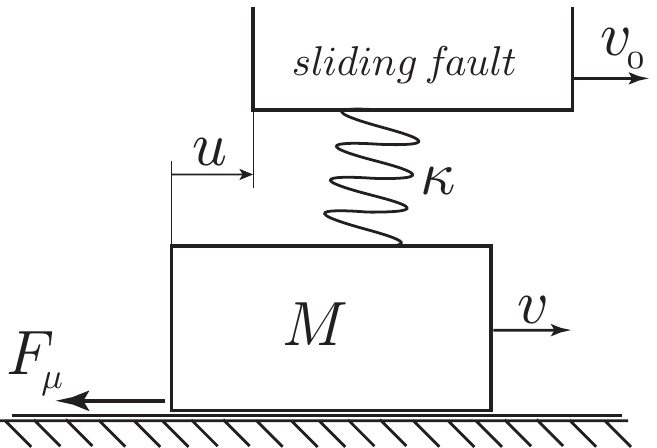}
\caption{Spring-block model describing the earthquake faulting.}\label{fig:Burridge_Knopoff_model}
\end{figure}
The equations of our model are:
\be\begin{aligned}\label{eq:ericksonequations}
{\theta}' &= -\frac{v}{L}\left(\theta + b \ln\left(\frac{v}{v_0}\right) \right),\\
{u}' &= v-v_0,\\
M{v}' &= -  \kappa u -\left(\theta + a \ln \left(\frac{v}{v_0}\right) \right),
\end{aligned}\ee
where the variable $u$ is the relative displacement between the two fault sides and the prime denotes the time derivative. The parameter $L$ is the characteristic displacement that is needed to recover the contact between the two surfaces when the slip occurs, while $a$ and $ b$ are empirical coefficients that depend on the material properties \cite{Marone1998}. 
We introduce the   dimensionless coordinates $(x, y, w, t)$ into system \eqref{eq:ericksonequations}, where  $\theta = ax ,\,\, u = L y,\,\, v=v_0 w,\,\, t= ({v_0}/L)t'$: 
\be\label{eq:3Dlog}
\begin{aligned}
\dot{x} &=-w\left(x +(1+\alpha) \ln(w)\right),\\
\dot{y} &= w -1,\\
\vare \dot{w} &= - y - \frac{x+\ln(w)}{\xi} .
\end{aligned}\ee
We notice that equation  \eqref{eq:3Dlog} has a singularity in $w=0$ and to avoid it we henceforth introduce the variable $z = \ln(w)$ so that we obtain the formulation presented in \eqref{eq:3Dproblem}.
In system \eqref{eq:3Dlog} we have introduced the parameters: $\vare = Mv_0^2/(\kappa L^2)$ such that $1/\sqrt{\vare}$ is a non-dimensional frequency, $\xi = (\kappa L)/a$: the non-dimensional spring constant and $\alpha= (b-a)/b$ describing the sensitivity to the velocity relaxation \cite{Erickson2008}. We consider the parameter values presented by  \citename{Madariaga1998} \citeyear{Madariaga1998}: $\vare \in [10^{-24}, 10^{-8}]$, $\xi=0.5$, $\alpha>\xi$. An extensive reference to the parameter sets is in  the work of \citename{dieterich1972} \citeyear{dieterich1972,dieterich1978a,Dieterich1979}. 
We choose to keep the parameter $\xi>0$ fixed (selecting $\xi=0.5$ in our computations) and we use $\alpha$ as the bifurcation parameter.  With  this choice the  study of  \eqref{eq:3Dproblem} as a singular perturbation problem is simplified. Indeed as we will see in section \ref{sec:GSPT}, the critical manifold of \eqref{eq:3Dproblem} is a surface that depends on  $\xi$. The results of our analysis can be easily interpreted for the case of $\alpha$ fixed and $\xi$ varying, that is the standard approach in the literature.

\section{Singular perturbation approach to the model}\label{sec:GSPT}
The positive constant $\vare\ll1$ in system \eqref{eq:3Dproblem} measures the separation of two  time scales.  In particular the variables $(x,y)$ are slow  while $z$ is fast.
We call equation \eqref{eq:3Dproblem} the slow problem and the dot refers to the  differentiation with respect to the slow time $t$. We introduce the fast time  $\tau = t/\vare$ to obtain the fast problem:
\be\label{eq:fast_problem}
\begin{aligned}
x' &= -\vare \rme^{z}(x +(1+\alpha) z),\\
y' &= \vare \left(\rme^{z} -1\right),\\
z' &= -\rme^{-z}\left(y + \frac{x+z}{\xi} \right),
\end{aligned}\ee 
where the  prime stands for differentiation with respect to  $\tau$.
The two systems \eqref{eq:3Dproblem} and  \eqref{eq:fast_problem} are equivalent whenever $\vare>0$. In the singular analysis  we consider  two different limit systems.  By setting $\vare=0$ in \eqref{eq:3Dproblem} we obtain the reduced problem:
\be \label{eq:reduced_problem_stupid}
\begin{aligned}
\dot{x} &= -\rme^{z}(x +(1+\alpha) z),\\
\dot{y} &=  \rme^{z} -1,\\
0 &= -\rme^{-z}\left(y + \frac{x+z}{\xi} \right),
\end{aligned}\ee
that is also referred in the  literature as the quasi-static slip motion (specifically $M\to0$ in \eqref{eq:ericksonequations}, \cite{Ruina1983}). Setting $\vare=0$ in  \eqref{eq:fast_problem} gives the layer problem:
\be \label{eq:layer_problem}
{z}' = -\rme^{-z}\left(y + \frac{x+z}{\xi} \right),
\quad (x,y)(\tau)=(x^0,y^0).\ee
System \eqref{eq:layer_problem} has a plane of fixed points that we denote the critical manifold:  
\be \label{eq:critical_manifold}
C_0  :=\Bigl\{(x, y, z) \in \mathbb{R}^3\Bigr| \quad  z= -x -\xi y \Bigr\}.
\ee
This manifold, depicted in grey in Figure \ref{fig:3d_plot_introduction}, is attracting:
\be \label{eq:normally_attracting} \frac{\partial {z}'}{\partial z}\biggr|_{C_0} = -\xi^{-1}\rme^{-z} <0. \ee
The results by \citename{fenichel1974a} \citeyear{fenichel1974a,Fenichel1979} guarantee that close to $C_0$ there is an attracting (due to \eqref{eq:normally_attracting}) slow-manifold $S_\vare$ for any compact set $(x,y)\in\mathbb{R}^2$ and $\vare$ sufficiently small.  However we notice in \eqref{eq:normally_attracting} that $C_0$ loses its normal hyperbolicity at an exponential rate when $z\to+\infty$. This is a key complication: orbits leave a neighborhood of the critical manifold even if it is formally attracting. This is a non-standard loss of hyperbolicity that appears also in other physical problems  \cite{rankin2011a}. To our knowledge, \cite{Kristiansen2015a} is the first attempt on a theory of exponential loss of hyperbolicity. In section \ref{sec:proof} we will apply the method described in \cite{Kristiansen2015a} to resolve the loss of hyperbolicity at infinity. In this paper we do not aim to give a general geometric framework to this approach. In the case of loss of hyperbolicity at an algebraic rate, like in the autocatalator problem studied originally by \citename{Gucwa2009783} \citeyear{Gucwa2009783}, we refer to the work of  \citename{kuehn2014a} \citeyear{kuehn2014a}.
  \\
Na\"ively we notice that when $z\gg1$ the dynamics of system \eqref{eq:3Dproblem} is driven by a new time scale, that is not related to its slow-fast structure. Assuming $z \gg \ln\vare^{-1}$ we can rewrite \eqref{eq:3Dproblem} as:
\be\label{eq:dynamicsinfinity}
 \begin{aligned}
\dot{x} &= - x -(1+\alpha)z,\\
\dot{y} &=1,\\
\dot{z} &= 0,
\end{aligned}\ee 
where we have further rescaled the time by dividing the right hand side by $\rme^z$ and ignored the higher order terms.
Hence in this regime there is a  family of $x$-nullclines:
\be \label{eq:L0}  x+(1+\alpha)z=0,\ee
that are attracting since:
\[ \frac{\partial \dot{x}}{\partial x} = -1.\]
This na\"ive approach is similar to the one used by \citename{rice1986a} \citeyear{rice1986a} to describe the different  time scales that appear in system \eqref{eq:3Dproblem}.

\section{Reduced Problem}\label{sec:singular_analysis}
We write the  reduced problem \eqref{eq:reduced_problem_stupid} as a vector field $f_0(y,z; \alpha)$  by eliminating $x$ in \eqref{eq:reduced_problem_stupid}:
\be \label{eq:reduced_problem}
f_0(y,z; \alpha) :=\quad \begin{cases}
\dot{y} &= \rme^{z} -1,\\
\dot{z} &= \xi + \rme^{z}\left(\alpha z - \xi y - \xi \right).
\end{cases}\ee
The following proposition  describes the degenerate Hopf bifurcation at the origin of \eqref{eq:reduced_problem} for $\alpha=\xi$.

\begin{proposition}\label{prop:hamiltonian}
The vector field \eqref{eq:reduced_problem} has a unique fixed point in $(y,z)=(0,0)$ that undergoes a degenerate Hopf bifurcation for $\alpha=\xi$. In particular $f_0(y,z; \xi)$ is Hamiltonian and it can be rewritten as:
\be \label{eq:hamiltonian_sys}
f_0(y,z; \xi) = g(y,z) J \nabla H(y,z),\ee
with 
\begin{subequations}\label{eq:g_H}
 \begin{flalign}
g(y,z) &=  \frac{\rme^{\xi y + z}}{\xi}, \label{eq:g_hamilt} \\
H(y,z) &= -\rme^{-\xi y}\left(\xi y - \xi z + \xi +1 - \xi\rme^{-z} \right) +1,\label{eq:H_hamilt} 
\end{flalign}
\end{subequations}
 and where $ J $ is the standard symplectic structure matrix: $J= \begin{bmatrix} 0 & 1 \\ -1 & 0 \end{bmatrix}. $
\end{proposition}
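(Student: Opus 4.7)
The plan splits naturally into three parts corresponding to the three claims: uniqueness of the equilibrium, the Hopf condition at $\alpha=\xi$, and the Hamiltonian identity implying degeneracy.

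First I would establish uniqueness of the fixed point of $f_0(\cdot,\cdot;\alpha)$. Setting $\dot y=0$ in \eqref{eq:reduced_problem} forces $\rme^{z}=1$, i.e.\ $z=0$. Substituting into $\dot z=0$ collapses it to $-\xi y=0$, so $(y,z)=(0,0)$ is the only equilibrium, and this holds for every $\alpha$.

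Next I would linearise $f_0$ at the origin. A direct computation gives the Jacobian
\begin{equation*}
Df_0(0,0;\alpha)=\begin{bmatrix} 0 & 1 \\ -\xi & \alpha-\xi \end{bmatrix},
\end{equation*}
so $\mathrm{tr}=\alpha-\xi$ and $\det=\xi>0$. Hence the eigenvalues are $\lambda_\pm(\alpha)=\tfrac12(\alpha-\xi)\pm\tfrac12\sqrt{(\alpha-\xi)^2-4\xi}$; at $\alpha=\xi$ they are $\pm i\sqrt{\xi}$, purely imaginary and non-zero. The transversality condition $\frac{d}{d\alpha}\mathrm{Re}\,\lambda_\pm(\xi)=\tfrac12\neq 0$ then yields a Hopf bifurcation at $\alpha=\xi$ in the classical sense.

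The heart of the proposition is the Hamiltonian identity \eqref{eq:hamiltonian_sys}. My strategy is a direct verification: compute $\nabla H$ from \eqref{eq:H_hamilt}, form $J\nabla H$, multiply by the integrating factor $g$ in \eqref{eq:g_hamilt}, and match to $f_0(y,z;\xi)$. Concretely, differentiating \eqref{eq:H_hamilt} gives
\begin{equation*}
\partial_y H=\xi^{2}\rme^{-\xi y}\bigl(y-z+1-\rme^{-z}\bigr),\qquad
\partial_z H=\xi\rme^{-\xi y}\bigl(1-\rme^{-z}\bigr).
\end{equation*}
Then $gJ\nabla H=g(\partial_z H,-\partial_y H)^{\top}$, and the factor $\rme^{-\xi y}$ cancels against $g$, leaving $\rme^{z}-1$ in the first slot (the $\dot y$ equation) and $\xi+\rme^{z}(\xi z-\xi y-\xi)$ in the second, which is exactly $\dot z$ at $\alpha=\xi$. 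Once this identity is established, the degeneracy of the Hopf bifurcation is immediate: since $g>0$ on $\mathbb{R}^2$, the vector field $f_0(\cdot,\cdot;\xi)$ shares its orbits with the Hamiltonian field $J\nabla H$, whose trajectories near the non-degenerate centre $(0,0)$ (where $\det D^2H(0,0)=\xi^{2}\det Df_0/g^{2}>0$) are the level sets of $H$ and hence a continuous family of periodic orbits. Consequently every Lyapunov coefficient of the Hopf bifurcation vanishes.

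The only non-mechanical step is finding the correct integrating factor $g=\rme^{\xi y+z}/\xi$, but once it is proposed the verification is purely algebraic. The mild bookkeeping obstacle is keeping track of the exponential factors when differentiating $H$; I would organise the computation by factoring out $\rme^{-\xi y}$ from $\nabla H$ before multiplying by $g$, so that the two exponentials cancel explicitly and the remaining polynomial-plus-exponential expressions can be matched term by term with \eqref{eq:reduced_problem} evaluated at $\alpha=\xi$.
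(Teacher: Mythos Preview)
Your proposal is correct and follows essentially the same route as the paper: linearise at the origin to get the Jacobian with vanishing trace at $\alpha=\xi$ and positive determinant, then verify the identity $f_0(y,z;\xi)=gJ\nabla H$ by direct substitution, concluding degeneracy from the Hamiltonian structure. You in fact supply more detail than the paper's very terse proof---explicit uniqueness of the equilibrium, the transversality check, and the componentwise computation of $\nabla H$---but the underlying strategy is identical.
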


\begin{proof}
The linear stability analysis of  \eqref{eq:reduced_problem} in the fixed point $(y,z)=(0,0)$ gives the following Jacobian matrix:
\be\label{eq:jacobian_hopf}Df_0(0,0;\alpha) = \begin{bmatrix}   0 & 1\\ -\xi &\alpha-\xi 
\end{bmatrix}.  \ee
The trace of \eqref{eq:jacobian_hopf} is zero for $\alpha=\xi$ and its determinant is $\xi>0$. Hence a  Hopf bifurcation occurs.    
The direct substitution of \eqref{eq:g_H} into \eqref{eq:hamiltonian_sys} shows that system  \eqref{eq:reduced_problem} is Hamiltonian  for $\alpha=\xi$. Therefore the Hopf bifurcation is degenerate. 
\qed\end{proof} 
 \begin{figure}[!t]
	\centering
	\begin{subfigure}{.48\textwidth}
  \centering
 \caption{  } \label{fig:hamiltonian}
  \includegraphics[width = 0.66 \linewidth]{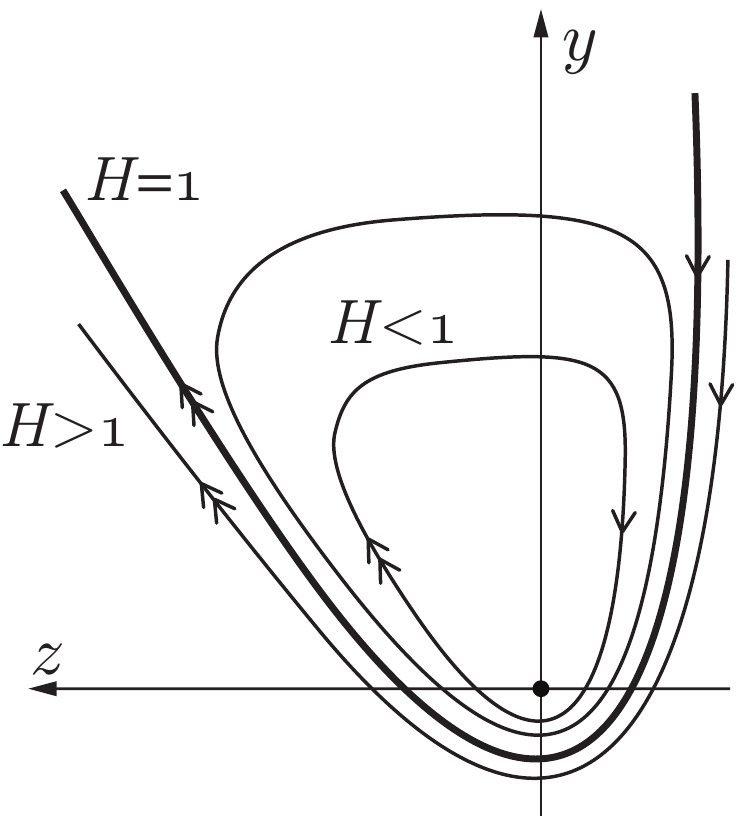}\end{subfigure}
\begin{subfigure}{.48\textwidth}
  \caption{ }\label{fig:Hamiltonian_solution_med}
  \centering \includegraphics[width=0.95\linewidth]{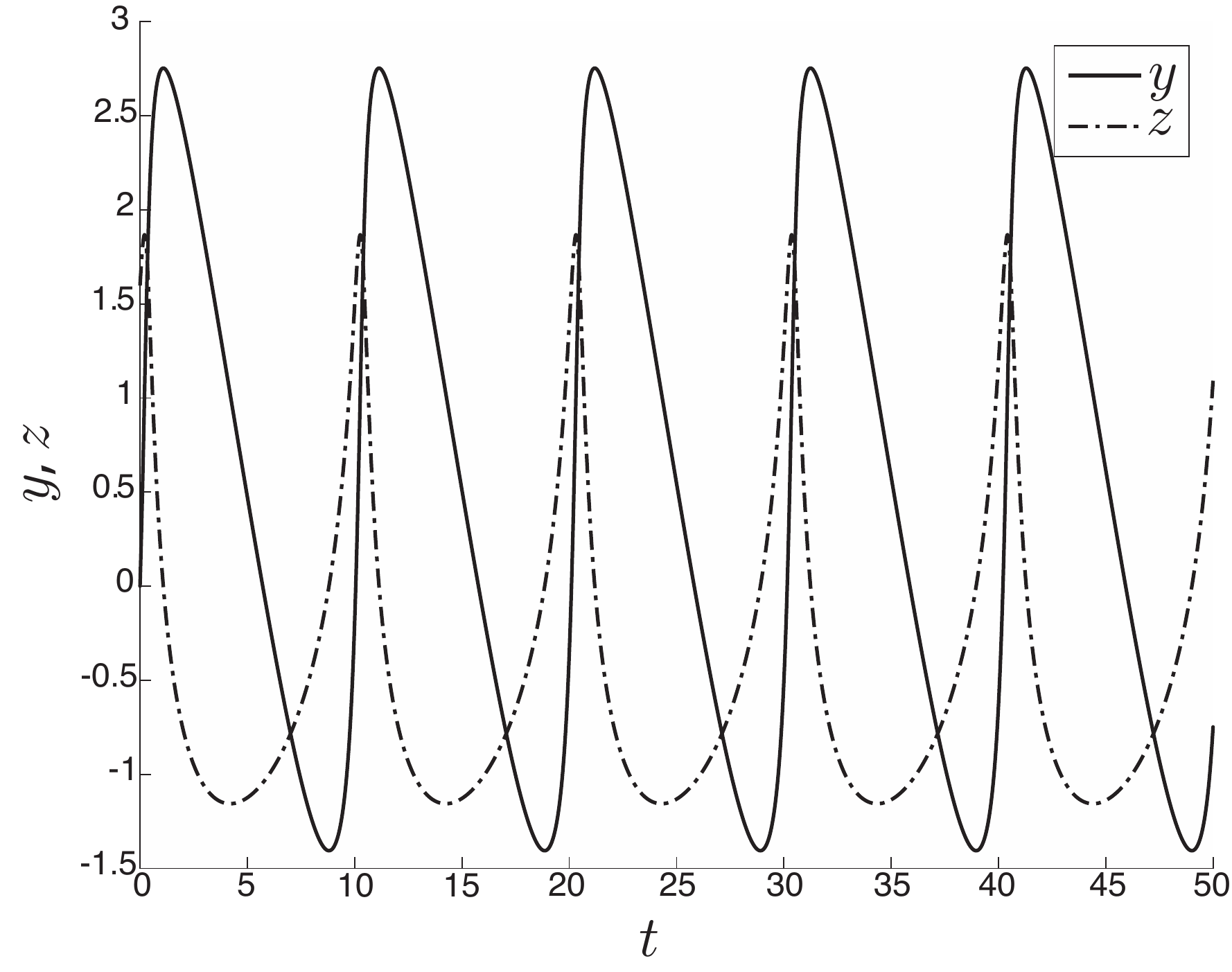} 
\end{subfigure}\caption{Behaviour of the reduced problem \eqref{eq:hamiltonian_sys} for $\alpha=\xi$. In  \subref{fig:hamiltonian}: phase space. The axis orientation is chosen in order to be consistent with the remaining figures of the paper. In \subref{fig:Hamiltonian_solution_med}: simulation of \eqref{eq:hamiltonian_sys} for $H=0.4,\,\,\xi=0.5$. }
	\label{fig:Hamiltonian_numerics}
	\end{figure}
The Hopf bifurcation of \eqref{eq:reduced_problem}  for $\alpha=\xi$  is a known result \cite{Ruina1983,putelat2008a,Erickson2008}. The  function $H(y,z)$ has been used as a Lyapunov function in \cite{gu1984a}  without realising the Hamiltonian structure of \eqref{eq:reduced_problem}.\\
 \noindent From Proposition \ref{prop:hamiltonian} we obtain a vertical family of periodic orbits for $\alpha=\xi$. 
The phase space of  \eqref{eq:hamiltonian_sys} is illustrated in Figure \ref{fig:hamiltonian} for positive values of $H(y,z)$. We remark that the fixed point $(y,z)=(0,0)$ is associated with $H(y,z)=0$. \\
The intersection of the $y$-axis with the orbits  $H(y,z)=h$ corresponds to the real roots of the Lambert equation: 
\be\label{eq:lambert} -\rme^{-\xi y}(\xi y +1) +1 = h, \quad h\geq0. \ee 
Equation \eqref{eq:lambert} has a real root for any $h>0$ in the region $y<0$, while a second real root in the region $y>0$ exists only  for $h\in(0,1)$ \cite{corless2014a}.
The intersection of the Hamiltonian trajectories with the $y$-axis is transversal for all $h>0$, since the following  condition holds:
\be\label{eq:transversality}
\frac{\partial H}{\partial y}(y,0) = \xi^2 y \rme^{-\xi y} \neq 0, \quad \forall y\neq0.
\ee
The trajectory  identified with $H(y,z)=1$ (that is   in bold in Figure \ref{fig:hamiltonian}) plays a special role since it separates the closed orbits for $H\in(0,1)$ from  the unbounded ones for $H\geq1$. Our analysis supports the results of \citename{gu1984a} \citeyear{gu1984a} and contrasts \cite{ranjith1999a}    where it is  claimed that   \eqref{eq:hamiltonian_sys} has no unbounded solutions.  
\begin{remark} From \eqref{eq:transversality} it follows that the function $H(y,0)$  defines a diffeomorphism between the points on the positive $y$-axis and the corresponding values $h\in(0,1)$.
\end{remark}
Figure \ref{fig:Hamiltonian_solution_med} highlights that the reduced problem \eqref{eq:hamiltonian_sys} has an intrinsic slow-fastness.
Indeed the phase space of \eqref{eq:hamiltonian_sys} is swept with different speeds depending on the region considered. This feature is represented in Figure \ref{fig:hamiltonian}, with the double arrow representing fast motion.  In particular when $ z>0$ the trajectories are swept faster than for $z<0$. This is due to the exponential function in \eqref{eq:reduced_problem}. The fast sweep for $z>0$ corresponds  to the steep increase in the $y$ coordinate of Figure \ref{fig:Hamiltonian_solution_med}. This fast dynamics for $z>0$ resembles the slip that happens during an earthquake rupture, while the slow motion for $z<0$ matches  the healing phase, recall Figure \ref{fig:3dnumerics}. From this observation we tend to disagree  with the notation used in the literature, that calls the reduced problem the quasi-static slip phase \cite{Ruina1983}.\\  
In order to describe the unbounded trajectories with $H(y,z)\geq 1$ for $y,z\to\infty$ and to extend the analysis to the case $\alpha\ne\xi$,  we introduce a compactification  of the reduced problem \eqref{eq:reduced_problem} and  then we  rewrite \eqref{eq:reduced_problem} on the Poincar\'e sphere.
 
\section{Compactification of the reduced problem}\label{sec:compact_reduced}

We define the  Poincar\'e sphere $\mathcal{S}^{2,+}$  as:
\be \label{eq:sphere_def} 
\mathcal{S}^{2,+} := \{(Y,Z,W)\in \mathbb{R}^3 \bigr|\quad  Y^2+Z^2+W^2=1, \quad W\geq0\},\ee
which projects  the phase space of \eqref{eq:reduced_problem}   onto  the northern hemisphere of $\mathcal{S}^{2,+}$.  We refer to \cite{chicone2006a} for further details on the compactification of vector fields. Geometrically \eqref{eq:sphere_def} corresponds to embedding \eqref{eq:reduced_problem} into the plane $W=1$ that we call the directional chart $k_2$:
\[ \fl k_2:= \mathcal{S}^{2,+} \cap \{W=1\}, \quad  y_2 = \frac{Y}{W}, \,\,z_2 = \frac{Z}{W},\]
and the dynamics on chart $k_2$ follows directly from \eqref{eq:reduced_problem} by variable substitution:
\be \label{eq:reduced_problem_2}
\begin{aligned}
\dot{y}_2 &= \rme^{z_2} -1,\\
\dot{z}_2 &= \xi + \rme^{z_2}\left(\alpha z_2 - \xi y_2 - \xi \right).
\end{aligned}\ee
The points at infinity in $k_2$  correspond to the condition $W=0$, that is the equator of $\mathcal{S}^{2,+}$.
To study the dynamics on the equator we introduce the two  additional directional charts:
\begin{subequations}
\label{eq:directional_charts}
 \begin{flalign}
&k_3:= \mathcal{S}^{2,+} \cap \{Z=1\}, \quad   y_3 = \frac{Y}{Z}, w_3 = \frac{W}{Z}, \label{eq:chart_kappa3def} \\
&k_1:= \mathcal{S}^{2,+} \cap \{Y=1\}, \quad  z_1 = \frac{Z}{Y}, w_1 = \frac{W}{Y}.\label{eq:chart_kappa1def} 
\end{flalign}
\end{subequations}
We follow the standard convention of \citename{Krupa2001} \citeyear{Krupa2001} and use the subscript $i = 1,2,3$ to denote a quantity  in chart $k_i$. 
We denote with $k_{ij}$  the transformation from chart $k_i$ to chart $k_j$ for  $i,j = 1,2,3$. We have the following change of coordinates: 
\begin{subequations}
\label{eq:change_charts}
 \begin{flalign}
&k_{23}:\quad  w_3= z_2^{-1}, \quad y_3 =y_2 z_2^{-1},\label{eq:R&S_kappa23} \\
&k_{21}:\quad  w_1 = y_2^{-1}, \quad z_1 = z_2y_2^{-1},\label{eq:R&S_kappa21} \\
&k_{31}:\quad w_1 = w_3 y_3^{-1}, \quad z_1= y_3^{-1},\label{eq:R&S_kappa31} 
\end{flalign}
\end{subequations}
that  are defined for $z_2>0$, $y_2>0$ and $y_3>0$ respectively. The inverse transformations $k_{ji}=k_{ij}^{-1}$ are defined similarly.  Figure \ref{fig:disegno_sfera} shows a graphical representation of the sphere and the directional charts.  \\
We define $C_{0,\infty}$ as the extension of the critical manifold $C_0$ onto the equator of the sphere. From \eqref{eq:normally_attracting} it follows that   $C_{0,\infty}$ is non-hyperbolic.   
\begin{figure}[t!]
\centering
\includegraphics[scale=0.38]{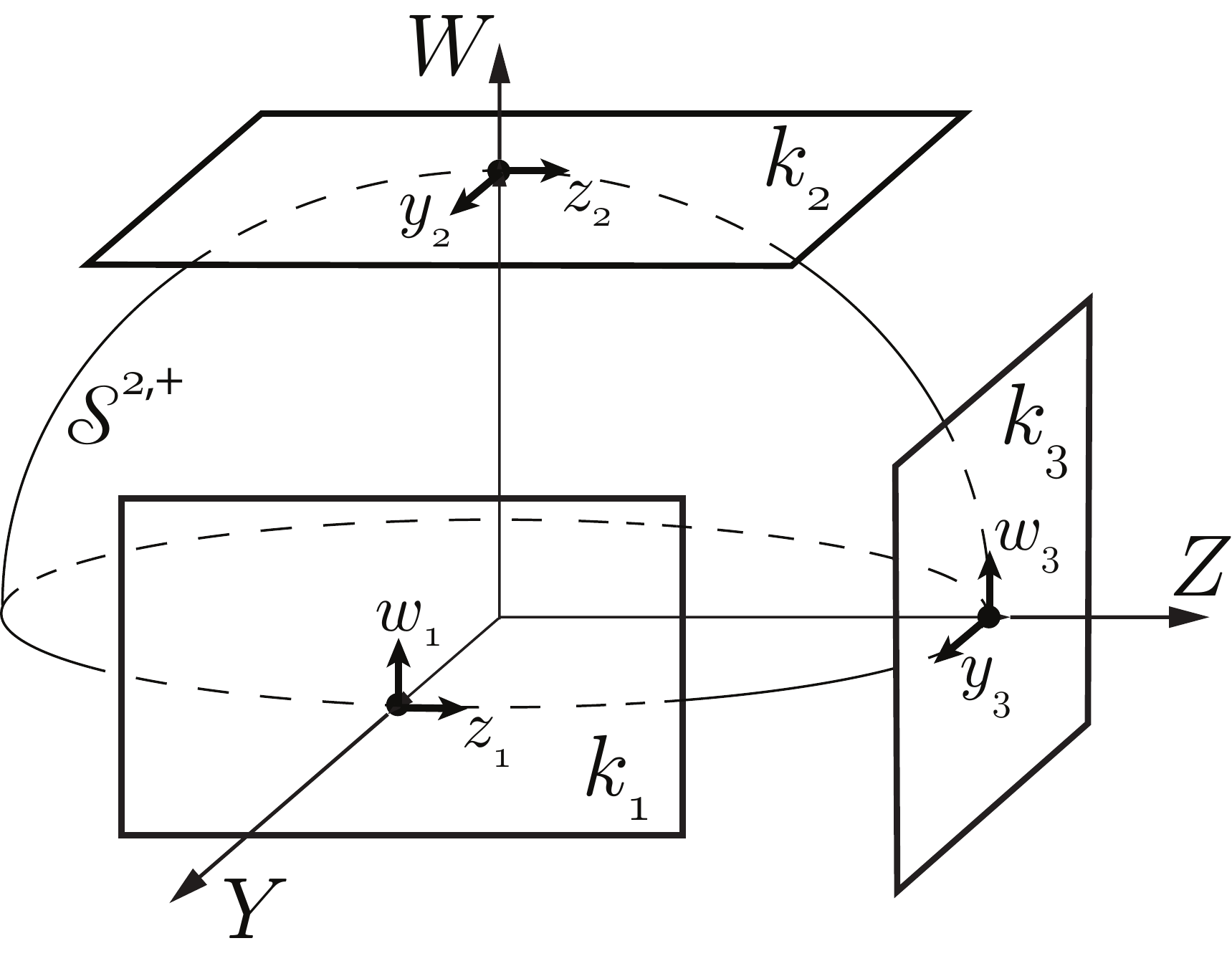}
\caption{\mbox{Poincar\'e sphere $\mathcal{S}^{2,+}$ and the directional charts $k_{1,2,3}$.}}\label{fig:disegno_sfera}
\end{figure} 

\begin{proposition}\label{prop:fixed_point_infinity}
There exists a time transformation   that is smooth  for $W>0$ and   that de-singularizes the dynamics  within $W=0$, so that the reduced problem \eqref{eq:reduced_problem} has  four fixed points $Q^{1,3,6,7}$ on $C_{0,\infty}$ satisfying: 
\begin{itemize}
\item $Q^1$ is an improper stable node with a single eigenvector tangent to $C_{0,\infty}$.
\item $Q^3$ has one unstable direction that is tangent to $C_{0,\infty}$ and a unique center-stable manifold $W^{c,s}$.
\item $Q^6$ has one  stable direction that is  tangent to $C_{0,\infty}$  and a unique center-unstable manifold $W^{c,u}$.
\item $Q^7$ is an improper unstable node with a single eigenvector tangent to $C_{0,\infty}$. 
\end{itemize}
The stability properties of the fixed points are independent of   $\alpha$, in particular both $W^{c,s}$ and $W^{c,u}$ are smooth in $\alpha$. 
\end{proposition}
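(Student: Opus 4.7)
The plan is to analyse \eqref{eq:reduced_problem} chart by chart on the Poincar\'e sphere, constructing in each chart a time rescaling that is smooth and strictly positive for $W>0$ and that desingularises the dynamics at the equator. Together the charts $k_3$, its $Z\to -Z$ reflection $k_{3-}$, and $k_1$ cover $C_{0,\infty}$, and the overlap maps \eqref{eq:change_charts} will be used to verify that no fixed points are missed and that stability types computed in distinct charts agree.

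The central step is in $k_3$, where I substitute $z_2 = 1/w_3$, $y_2 = y_3/w_3$ into \eqref{eq:reduced_problem_2} and introduce the new time $\tau_3$ via $d\tau_3/dt = e^{z_2} = e^{1/w_3}$, which is $C^\infty$ and strictly positive on $\{w_3>0\}$. A direct calculation gives
\begin{align*}
\frac{dw_3}{d\tau_3} &= -w_3(\alpha - \xi y_3 - \xi w_3) - \xi w_3^2\, e^{-1/w_3},\\
\frac{dy_3}{d\tau_3} &= w_3 - \alpha y_3 + \xi y_3^2 + \xi w_3 y_3 - w_3(1+\xi y_3)\, e^{-1/w_3}.
\end{align*}
Because $e^{-1/w_3}$ and all its $w_3$-derivatives vanish as $w_3\to 0^+$, the right-hand side extends $C^\infty$-smoothly to $\{w_3\ge 0\}$; this set is invariant and the restricted flow is $dy_3/d\tau_3 = y_3(\xi y_3 - \alpha)$, producing two fixed points at $y_3 = 0$ and $y_3 = \alpha/\xi$. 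Linearisation at the former yields the Jordan block with double eigenvalue $-\alpha$ and single eigenvector along the $y_3$-axis (tangent to $C_{0,\infty}$), identifying the improper stable node $Q^1$. At the latter the Jacobian has eigenvalues $\alpha$ and $0$, with the unstable eigenvector tangent to $C_{0,\infty}$ and a transverse center eigendirection; the center manifold theorem then supplies the unique center-stable manifold $W^{c,s}$ (smooth in $\alpha$), identifying $Q^3$.

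The remaining two fixed points $Q^6,Q^7$ are obtained by an analogous construction in the complementary charts. In $k_{3-}$ the exponential $e^{z_2}=e^{-1/w_{3-}}$ is already flat at the boundary, so the appropriate rescaling is the polynomial one $d\tau_{3-}/dt = 1/w_{3-}$; in $k_1$ the rescaling is dictated by the sign of $z_1$ (exponential for $z_1>0$, polynomial for $z_1<0$). Computing the reduced flows on the invariant boundaries, linearising, and using \eqref{eq:R&S_kappa31} to eliminate redundancy yields the two remaining fixed points with the claimed Jordan-block structures, and the center manifold theorem supplies the unique $W^{c,u}$ at $Q^6$. Smoothness in $\alpha$ of the stability types and of the invariant manifolds follows from the smooth parameter dependence of the rescaled fields together with the fact that none of the nonzero eigenvalues vanishes in the relevant range of $\alpha$.

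The hard part is neither the bookkeeping across charts nor the $2\times 2$ linearisations, but the exponential — rather than polynomial — character of the rescaling in $k_3$. The factor $e^{1/w_3}$ is $C^\infty$ but not analytic at $w_3=0$, so the limiting field is analytic only modulo flat perturbations, and the classical Poincar\'e-compactification and invariant-manifold theorems for polynomial vector fields do not apply verbatim. Justifying the $C^\infty$ extension across $\{W=0\}$ and, in particular, the existence and uniqueness of $W^{c,s}$ and $W^{c,u}$ in this flat setting is exactly the content of the exponential loss-of-hyperbolicity treatment from \cite{Kristiansen2015a}, which provides the technical tool beyond the otherwise computational character of the proposition.
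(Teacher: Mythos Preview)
Your treatment of chart $k_3$ is essentially the paper's: the same exponential time change, the same two fixed points, the same Jordan block at $Q^1$ and the same $(\alpha,0)$ spectrum at $Q^3$. Your idea of handling $Q^7$ via the reflected chart $k_{3-}$ with the polynomial rescaling $w_{3-}^{-1}$ is also sound and in fact a bit cleaner than the paper's route through the $\kappa_3$-chart of the blow-up of $k_1$.

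There are, however, two genuine gaps.

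\emph{The point $Q^6$.} This is the hard one, and your proposal does not reach it. $Q^6$ sits at the origin $(z_1,w_1)=(0,0)$ of $k_1$, exactly where the ratio $z_1/w_1$ is indeterminate, so the exponential $e^{z_1/w_1}$ is neither large nor flat there and no single sign-of-$z_1$ rescaling extends smoothly across that corner. Concretely, along the relevant orbit one has $z_2\sim\ln y_2$, so $z_1\to 0$ while $z_1/w_1\to+\infty$: if you divide by $e^{z_1/w_1}$ the limiting field at $(0,0)$ has $\dot z_1=-\xi\neq 0$, and if you divide polynomially the field does not extend. The paper therefore first performs a polar blow-up of the origin of $k_1$ into charts $\kappa_{1,2,3}$, and then --- because the system in $\kappa_3$ is still degenerate at \emph{its} origin --- a second, non-standard blow-up $\omega_3=\rho$, $r_3=\rho^{-1}e^{-1/\rho}\,\eta$. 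Only after this exponential blow-up does $Q^6$ appear as a bona fide equilibrium $(\rho,\eta)=(0,1)$ with one stable and one centre direction. This step is the technical core of the proposition (the paper flags it as new), and it is absent from your outline; the reference to \cite{Kristiansen2015a} does not cover it either, as the paper explicitly notes.

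\emph{Uniqueness of $W^{c,s}$ and $W^{c,u}$.} Invoking the centre manifold theorem gives existence but not uniqueness: a one-dimensional centre manifold is unique only on the side where the reduced flow is attracting, and you have not checked that sign. The paper sidesteps the computation by exploiting the Hamiltonian at $\alpha=\xi$: the level set $H=1$ is an explicit invariant curve through $Q^3$ (Lemma~\ref{lem:forward_ham}) and through $Q^6$ (Lemma~\ref{lem:backwards_ham}), hence it \emph{is} the centre manifold and is unique as an orbit; smooth dependence on $\alpha$ then propagates uniqueness to the interval \eqref{eq:alpha_interval}. If you prefer a direct argument you must compute the quadratic term of the reduced flow on the centre manifold at $Q^3$ (and, after the double blow-up, at $Q^6$) and verify its sign.
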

\begin{figure}[ht!]
\centering
\includegraphics[scale=0.9]{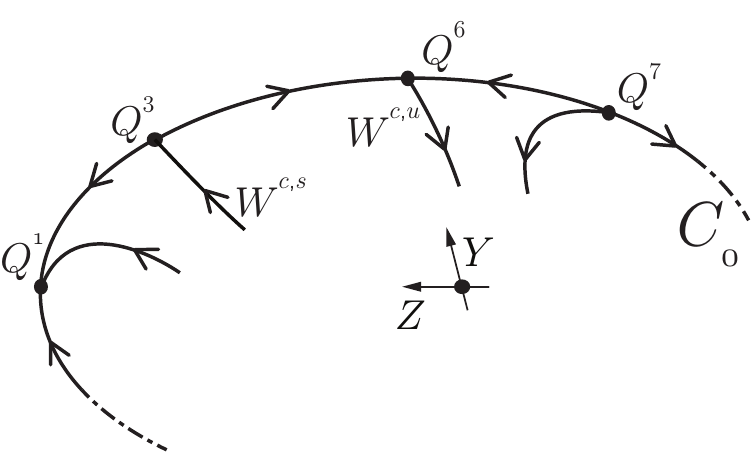}
\caption{Fixed points on the compactified critical manifold $C_0$. }\label{fig:fixed_point_infinity}
\end{figure} 
Figure \ref{fig:fixed_point_infinity} gives a representation of the statements of Proposition \ref{prop:fixed_point_infinity}. 
We remark that we use superscripts as enumeration of the points $Q^m, m=1,3,6,7$ to avoid confusion with the subscripts that we have used to define the charts $k_i, i=1,2,3$. In particular the enumeration choice of the superscripts will become clear in section \ref{sec:infinity}, where we will introduce the remaining  points $Q^{2,4,5}$ in \eqref{eq:QaQbQc}. In  Proposition \ref{theo:bifurcation_alpha} we relate the structure  at infinity of  \eqref{eq:reduced_problem} to the dynamics on  $C_0$ with respect to the  parameter $\alpha$. 
\begin{proposition}\label{theo:bifurcation_alpha}
Fix $c>0$ small and consider the parameter interval:
\be \label{eq:alpha_interval} \alpha\in[ \xi-c,\xi+c].\ee 
Then Figure \ref{fig:bifurcation_alpha} describes the phase space of \eqref{eq:reduced_problem} with respect to  $\alpha$. In particular:
\begin{itemize}
\item When $\alpha<\xi$ the set $W^{c,s} $ separates the basin of attraction of $(y,z)=(0,0)$ from the solutions that are forward asymptotic to $Q^1$.
\item When $\alpha=\xi$ Proposition \ref{prop:hamiltonian} holds. The set $H=1$  corresponds to  $W^{c,s}\cap W^{c,u}$. 
\item  When $\alpha>\xi$ the set $W^{c,u} $ separates the solutions that are backwards asymptotic to the origin to the ones that are backwards asymptotic to $Q^7$.
\end{itemize}
Therefore no limit cycles appear in the reduced problem for $\vare=0$ and $\alpha\neq\xi$.
\end{proposition}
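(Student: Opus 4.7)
The strategy combines a Lyapunov-type argument using the Hamiltonian $H$ from \eqref{eq:H_hamilt} with the smooth dependence on $\alpha$ of the invariant manifolds from Proposition \ref{prop:fixed_point_infinity}. First, I observe that $(y,z)=(0,0)$ is the unique finite equilibrium: $\dot y = \rme^{z}-1=0$ forces $z=0$, and then $\dot z|_{z=0}=-\xi y$ gives $y=0$. The linearisation \eqref{eq:jacobian_hopf} has trace $\alpha-\xi$ and positive determinant $\xi$, so the origin is asymptotically stable for $\alpha<\xi$, unstable for $\alpha>\xi$, and a linear centre at $\alpha=\xi$.

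Next I would rule out limit cycles for $\alpha\neq\xi$. Writing $f_0(y,z;\alpha)=f_0(y,z;\xi)+(\alpha-\xi)(0,z\rme^{z})$ and using $\nabla H\cdot f_0(\cdot;\xi)\equiv 0$ from Proposition \ref{prop:hamiltonian} together with the direct computation $\partial_z H = \xi\rme^{-\xi y}(1-\rme^{-z})$, one obtains
\be\label{eq:Hdot}
\dot H \;=\; \xi(\alpha-\xi)\,z(\rme^{z}-1)\,\rme^{-\xi y}.
\ee
Since $z(\rme^{z}-1)\geq 0$ with equality only on $\{z=0\}$, $H$ is monotone along every trajectory. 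The line $\{z=0\}$ is crossed transversally away from the origin, again because $\dot z|_{z=0}=-\xi y$, so any putative closed orbit would meet $\{z=0\}$ only in isolated points and $H$ would be strictly monotone along it, a contradiction. This already delivers the final sentence of the proposition.

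For the global phase portraits, I would use the smoothness in $\alpha$ of $W^{c,s}$ and $W^{c,u}$ from Proposition \ref{prop:fixed_point_infinity}. At $\alpha=\xi$, \eqref{eq:Hdot} shows that $\{H=1\}$ is flow-invariant; a local analysis at $Q^{1,3,6,7}$ identifies its two unbounded branches with the unique orbit landing at $Q^3$ and the unique orbit leaving $Q^6$, and conservation of $H$ forces them to coincide, so $\{H=1\}=W^{c,s}\cap W^{c,u}$. For $\alpha$ in the interval \eqref{eq:alpha_interval}, the smooth persistence of $W^{c,s}$ (resp.\ $W^{c,u}$) gives an embedded $1$-dimensional invariant curve dividing the Poincaré hemisphere into two open flow-invariant regions, and \eqref{eq:Hdot} determines on which side the origin lies. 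The Poincaré--Bendixson theorem on the compact hemisphere, combined with the absence of limit cycles, then forces each region to be the basin of exactly one attractor (for $\alpha<\xi$) or repeller (for $\alpha>\xi$): either the origin or the improper node $Q^1$ (respectively $Q^7$), matching Figure \ref{fig:bifurcation_alpha}. The main obstacle is this global persistence step: one must exclude that the perturbed $W^{c,s}$ and $W^{c,u}$ fold back or accumulate on some spurious compact invariant set before reaching the equator; the strict sign of \eqref{eq:Hdot} off $\{z=0\}$ is precisely what prevents such recurrence and forces each branch to extend monotonically across level sets of $H$ all the way to the corresponding fixed point at infinity.
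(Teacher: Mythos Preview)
Your argument is correct, but it takes a different and in some respects more elementary route than the paper. The key identity \eqref{eq:Hdot} is exactly the integrand that the paper arrives at in its Melnikov computation \eqref{eq:dalpha}, but you exploit it pointwise as a Lyapunov derivative rather than integrating it along closed Hamiltonian orbits. This immediately gives strict monotonicity of $H$ along trajectories for $\alpha\neq\xi$ and hence the absence of limit cycles, without any Taylor expansion of a distance function or any need to control the improper integral along $H=1$ as the paper does in \eqref{eq:integral_h1_parametrized}. Your Poincar\'e--Bendixson step on the compactified hemisphere then replaces the paper's explicit tracking of $d^{\pm}$ and of the separation of $W^{c,s}$ and $W^{c,u}$.

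What the paper's approach buys is reusability: the Melnikov framework with $\Delta_\alpha(h)$ is set up precisely so that in section~\ref{sec:perturbed_reduced} one can add the $\varepsilon$-perturbation term $\Delta_\varepsilon(h)$ and solve $\Delta(\alpha,\varepsilon)=0$ by the implicit function theorem to locate limit cycles on $S_\varepsilon$. Your Lyapunov argument, while cleaner for Proposition~\ref{theo:bifurcation_alpha} itself, would have to be redone from scratch there. A minor point: your global separation claim implicitly uses that the backward (resp.\ forward) orbit of $W^{c,s}$ (resp.\ $W^{c,u}$) actually lands on the specific equator point $Q^7$ (resp.\ $Q^1$); this does follow from your monotonicity argument once one notes that $H\to 1$ along the approach to $Q^3$ or $Q^6$ and that only the improper nodes can receive orbits with $H>1$, but it is worth making that step explicit.
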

 \begin{figure}\centering
 \begin{subfigure}{.32\textwidth}
  \centering 
  \caption{ }\label{fig:alpha_less_xi}
\includegraphics[width=\linewidth]{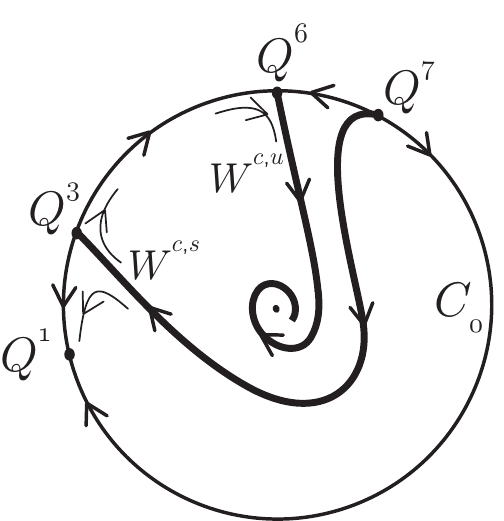}
\end{subfigure}
 \begin{subfigure}{.32\textwidth}
  \centering 
  \caption{ }\label{fig:alpha_equal_xi}
\includegraphics[width=\linewidth]{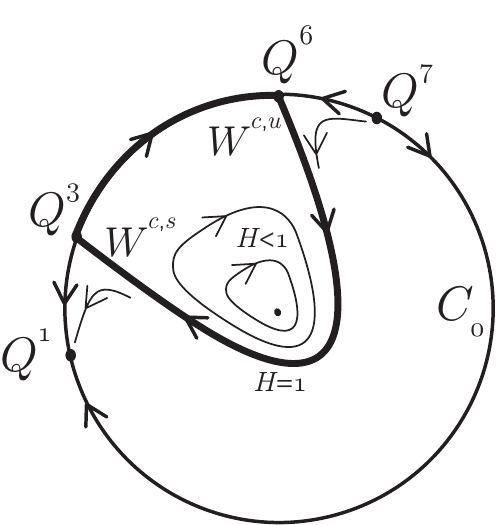}
\end{subfigure}
\begin{subfigure}{.32\textwidth}
  \centering 
  \caption{ }\label{fig:alpha_larger_xi}
\includegraphics[width=\linewidth]{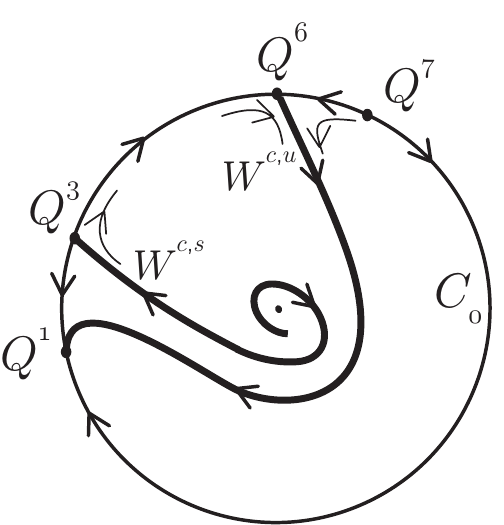}
\end{subfigure}
\caption{Bifurcation diagram of \eqref{eq:reduced_problem}  with respect to the parameter $\alpha$. Orbits spiral inwards for $\alpha<\xi$ \subref{fig:alpha_less_xi}  or outwards for $\alpha>\xi$ \subref{fig:alpha_larger_xi}. In \subref{fig:alpha_equal_xi}: $\alpha=\xi$.}\label{fig:bifurcation_alpha}
\end{figure}
\begin{remark}
The local stability analysis of $(y,z)=(0,0)$ can be directly obtained using $H(y,z)$ as a Lyapunov function. This was done in \cite{gu1984a}.  
\end{remark}
In the rest of the section we prove the previous two propositions. In  sections \ref{subsec:K3} and \ref{subsec:K1} we perform an analysis of \eqref{eq:reduced_problem} in the two charts $k_3$ and $k_1$ respectively to   show Proposition \ref{prop:fixed_point_infinity}. We prove Proposition \ref{theo:bifurcation_alpha} in section \ref{sec:infinity_to_reduced}.
 
\subsection{Chart $k_3$}\label{subsec:K3}
We insert   \eqref{eq:chart_kappa3def} into the reduced problem \eqref{eq:reduced_problem_2}  and obtain the following system:
 \be\label{eq:compactified_reduced_K3} \begin{aligned}
\dot{w}_3 &= -w_3(\alpha-\xi y_3)+\xi w_3^2 (1-\rme^{-\frac{1}{w_3}}),\\
\dot{y}_3 &= - y_3(\alpha-\xi y_3) -w_3(1+\xi y_3) (1-\rme^{-\frac{1}{w_3}}),
\end{aligned} \ee 
here we have divided the right hand side by $\exp(1/w_3)$ to de-singularize $w_3=0$.

\begin{remark}
The division by $ \exp(1/w_3)$ in \eqref{eq:compactified_reduced_K3}  is formally performed by introducing  the new time $t_3$ such that: 
\be\label{eq:timet3} \rmd t_3  = \exp(1/w_3)  \rmd t.\ee  
A similar de-singularization procedure is also used in the blow-up method. 
\end{remark}
System \eqref{eq:compactified_reduced_K3} has two fixed points:
\begin{subequations}
\label{eq:Q1_Q2} 
\begin{flalign}
&Q^1 := \quad(w_3, y_3) = (0,0),\label{eq:Q1} \\
&Q^3 := \quad(w_3,y_3) = \left(0, \frac{\alpha}{\xi}\right).\label{eq:Q2} 
\end{flalign}
\end{subequations}
 The point $Q^1$ is a stable improper node with the double eigenvalue $-\alpha$ and a single eigenvector $(0,1)^T$. The point $Q^3$  has one unstable direction   $(0,1)^T$ due to the positive eigenvalue $\alpha$ and a  center direction  $(\alpha/(1+\alpha),1)^T$ due to a zero eigenvalue. 
Notice that for $\alpha=\xi$ then  $Q^3=(0,1)$.  

\begin{lemma}\label{lem:forward_ham}
There exists a unique center-stable manifold $W^{c,s}$ at the point $Q^3$. This manifold is smooth in $\alpha$. For $\alpha=\xi$ the set  $H=1$ coincides with $W^{c,s}$.
\end{lemma}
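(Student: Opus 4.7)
The plan is to produce $W^{c,s}$ from standard center manifold theory applied to a smooth extension of \eqref{eq:compactified_reduced_K3} across $\{w_3=0\}$, to promote existence to uniqueness by exploiting the competition between the algebraic center rate and the exponential unstable rate, and finally to recognize $\{H=1\}$ as this unique invariant curve when $\alpha=\xi$.

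First I would observe that $w_3 \mapsto \rme^{-1/w_3}$ is $C^\infty$-flat at $w_3=0^+$ and extends by zero to a $C^\infty$ function on a neighborhood of $w_3=0$ in $\mathbb{R}$; the vector field \eqref{eq:compactified_reduced_K3} inherits a $C^\infty$ extension to a two-sided neighborhood of $Q^3=(0,\alpha/\xi)$. The linearization at $Q^3$ has eigenvalues $0$ and $\alpha>0$, with the unstable eigenvector tangent to $\{w_3=0\}$ and a transverse center eigenvector. The classical parameter-dependent center manifold theorem then furnishes, for each finite $k$, a $C^k$ locally invariant manifold through $Q^3$ tangent to the center direction and depending $C^k$-smoothly on $\alpha$; since there is no stable direction at $Q^3$, this is the desired $W^{c,s}$.

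For uniqueness I would parametrize any candidate as $y_3=\alpha/\xi+h(w_3)$ with $h(0)=0$ and $h'(0)$ prescribed by the center direction. Computing the reduced flow on such a graph yields $\dot w_3 = -c\,w_3^2 + O(w_3^3) + \text{(flat in }w_3\text{)}$ for an explicit constant $c>0$, so orbits approach $Q^3$ forward in time at the algebraic rate $w_3\sim 1/(ct)$. If $h_1$ and $h_2$ are two such candidates, their difference $d:=h_1-h_2$ is $C^\infty$-flat at $w_3=0$ and satisfies, along the reduced flow, a linearized equation whose leading coefficient is the unstable eigenvalue $\alpha$. Hence $|d|$ would be amplified at the full exponential rate $\rme^{\alpha t}$ while $w_3$ decays only algebraically; requiring $d$ to remain $C^\infty$-flat for all positive time forces $d\equiv 0$. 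This is the mechanism underlying the exponential-loss-of-hyperbolicity framework cited in section~\ref{sec:GSPT}, and smoothness in $\alpha$ transfers unambiguously to the resulting unique manifold.

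For the Hamiltonian identification at $\alpha=\xi$, Proposition~\ref{prop:hamiltonian} ensures that $\{H=1\}$ is invariant. Solving $H=1$ in \eqref{eq:H_hamilt} yields the explicit curve $y = z-1-1/\xi + \rme^{-z}$; expressing it in chart $k_3$ via \eqref{eq:R&S_kappa23} gives $y_3 = 1-(1+1/\xi)w_3 + w_3\rme^{-1/w_3}$, a $C^\infty$ function at $w_3=0^+$ passing through $Q^3=(0,1)$ and tangent there to the center direction. The local uniqueness established above then forces $\{H=1\}$ to coincide with $W^{c,s}$ in a neighborhood of $Q^3$. The main obstacle is the uniqueness step: standard center manifold theorems admit $C^\infty$-flat ambiguities, and ruling them out here requires careful bookkeeping of the linearized variation along the reduced flow to show that every nontrivial flat difference is amplified at the full unstable exponential rate despite the algebraic center decay. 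Once uniqueness is in hand, the Hamiltonian identification reduces to a direct coordinate calculation.
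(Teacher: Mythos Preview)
Your proposal is correct and more carefully argued than the paper's own proof, though the underlying mechanism is the same. The paper proceeds in the opposite order: it first writes the level set $H=1$ explicitly in chart $k_3$ as the implicit relation $\xi(y_3-1)+w_3(\xi+1)-\xi w_3\rme^{-1/w_3}=0$, observes that this invariant curve reaches $Q^3$ as $w_3\to 0$, and from this concludes that $Q^3$ has ``saddle-like behaviour'' with a unique center--stable manifold tangent to the center eigenvector; smooth persistence in $\alpha$ away from $\alpha=\xi$ is then simply asserted. In other words, the paper \emph{uses} the Hamiltonian orbit at $\alpha=\xi$ to certify the saddle structure (and hence uniqueness) and then perturbs, whereas you first establish existence and uniqueness of $W^{c,s}$ abstractly for every $\alpha$ via center manifold theory plus the rate argument, and only afterwards identify it with $\{H=1\}$ at $\alpha=\xi$ by invoking that uniqueness. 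Your route makes the uniqueness mechanism explicit---the algebraic center decay $w_3\sim 1/(ct)$ against the exponential unstable rate $\rme^{\alpha t}$ rules out the usual $C^\infty$-flat ambiguity of center manifolds---and delivers uniqueness for all $\alpha$ directly rather than by perturbation from the Hamiltonian case. The paper's route is terser but leaves that mechanism hidden inside the phrase ``saddle-like behaviour''; both arrive at the same conclusion.
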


\begin{proof}
For $\alpha=\xi$ we rewrite the Hamiltonian \eqref{eq:H_hamilt} in chart $k_3$ and insert the condition $H=1$ to obtain the implicit equation:
\be\label{eq:Hamilt_Chart3} \xi(y_3-1) + w_3 (\xi+1) - \xi w_3 \rme^{-\frac{1}{w_3}}=0 ,\ee
then $w_3\to0$ gives $y_3 \to 1$ that is the point $Q^3$. As a consequence $Q^3$ has a saddle-like behaviour with an unique center-stable manifold $W^{c,s}$ tangent to $(\alpha/(1+\alpha),1)^T$. This invariant manifold  $W^{c,s}$ is smooth in $\alpha$ and therefore it preserves its features for small variations of $\alpha$ from $\alpha=\xi$.
\qed \end{proof}

\begin{remark}
With respect to $t_3$ the points within $W^{c,s}$ decay algebraically to $Q^3$, while the decay towards the stable node $Q^1$ is exponential. Using \eqref{eq:timet3}   it then  follows that all these points  reach $w_3=0$ in finite time with respect to the original slow time $t$. This is a formal proof of the finite time blow-up of solutions of \eqref{eq:reduced_problem} for $\alpha>\xi$ that was also observed by  \citename{gu1984a}  \citeyear{gu1984a} and by \citename{pomeau2011critical} \citeyear{pomeau2011critical}.
\end{remark}

\subsection{Chart $k_1$} \label{subsec:K1}
We insert \eqref{eq:chart_kappa1def} into the reduced problem \eqref{eq:reduced_problem_2} to obtain the dynamics in chart $k_1$:
 \be\label{eq:compactified_reduced_K1} 
\begin{aligned}
\dot{w} &= w^2(1-\rme^{\frac{z}{w}}),\\
\dot{z} &= w(\xi+z)(1-\rme^{\frac{z}{w}}) + \rme^{\frac{z}{w}}(\alpha z - \xi),
\end{aligned} \ee
where we have dropped the subscript  for the sake of readability.
We observe that the exponential term in \eqref{eq:compactified_reduced_K1} is not well defined in the origin.  For this reason we introduce the  blow-up transformation:
\be \label{eq:blowup_compactification} 
w = \bar{r}\bar{\omega}, \qquad z = \bar{r}\bar{\zeta},\ee
where $(\bar{\omega},\bar{\zeta})\in S^1 = \{ (\bar{\omega},\bar{\zeta}): \bar{\omega}^2 + \bar{\zeta}^2 = 1\}$ and $\bar{r}  \geq0$.
We consider the following  charts:
\begin{subequations}
\label{eq:charts_compactification}
 \begin{flalign} 
& \kappa_1: \quad w = r_1 \omega_1, \quad z = r_1,\label{eq:kappa1} \\
& \kappa_2: \quad w = r_2 , \qquad z = r_2 \zeta_2,\label{eq:kappa2}\\ 
& \kappa_3: \quad w = r_3 \omega_3, \quad z = -r_3.\label{eq:kappa3}
\end{flalign}
\end{subequations}
Next we perform an analysis of the blown-up vector field and the main results are summarized in Figure  \ref{fig:proof_bifurcation}.
\paragraph{Chart $\kappa_1$} We insert condition \eqref{eq:kappa1} into system   \eqref{eq:compactified_reduced_K1} and  divide the right hand side by $\exp(1/\omega_1)/r_1$  to get the de-singularized dynamics in chart $\kappa_1$: 
\be \label{eq:kap1_reduced1}
\begin{aligned}
\dot{\omega}_1 &= \omega_1(\xi- \alpha r_1) + r_1\omega_1^2 \xi\left(1 - \rme^{-\frac{1}{\omega_1}} \right),\\
\dot{r}_1 &= -r_1( \xi- \alpha r_1) - r_1^2 \omega_1(\xi + r_1)\left(1 - \rme^{-\frac{1}{\omega_1}} \right) .
\end{aligned}\ee
System \eqref{eq:kap1_reduced1} has  one fixed point in $(\omega_1,r_1) = (0,\xi/\alpha)$ that corresponds to the point $Q^3$ introduced in \eqref{eq:Q2}. 
Furthermore system  \eqref{eq:kap1_reduced1}  has a second fixed point  in $O_1:=  (\omega_1,r_1) = (0,0)$ with eigenvalues $\xi,\,\,-\xi$ and corresponding eigenvectors $(1,0)^T$ and $(0,1)^T$. 
Both the eigendirections of $O_1$ are invariant and we denote  by  $\gamma_1$ the heteroclinic connection between $Q^3$ and $O_1$ along the $r_1$-axis.\\
The initial condition $p_{1,\text{in}}$ on $W^{c,s}$ with  $\omega_1 = \delta>0$ is connected through the stable and the unstable manifolds of $O_1$ to the point $p_{1,\text{out}}:= (\omega_1,r_1) =  (\delta^{-1},0)$ as shown in Figure \ref{fig:reduced_kappa_1}.
\paragraph{Chart $\kappa_2$} We insert the transformation \eqref{eq:kappa2} into \eqref{eq:compactified_reduced_K1} and  divide the right hand side by $ \exp(\zeta_2)/r_2$ to obtain the de-singularized vector field.
In this chart there are no fixed points, yet the line  $r_2 =0$ is invariant and $\zeta_2$ decreases monotonically   along it. The orbit entering  from chart $\kappa_1$ has the initial condition $ p_{2,\text{in}}:= \kappa_{12}(p_{1,\text{out}})= \,\, (\zeta_2,r_2)= (\delta,0)$
 that lies  on the invariant line $r_2=0$.  Thus from $p_{2,\text{in}}$ we continue to the point $p_{2,\text{out}}:= (\zeta_2,r_2)= (-\delta^{-1},0)$, as shown in Figure \ref{fig:reduced_kappa_2}.
\paragraph{Chart $\kappa_3$} We introduce condition \eqref{eq:kappa3} into the vector field \eqref{eq:compactified_reduced_K1} and  divide by $w_3$ to obtain the de-singularized dynamics in chart $\kappa_3$: 
\be \label{eq:kap3_reduced1}
\begin{aligned}
\dot{\omega}_3 &= (\xi-r_3)(1-\rme^{-\frac{1}{\omega_3}})+ r_3\omega_3(1-\rme^{-\frac{1}{\omega_3}}) + \frac{\rme^{-\frac{1}{\omega_3}}}{r_3}(\alpha r_3 + \xi) ,\\
\dot{r}_3 &=  -r_3(\xi-r_3)(1-\rme^{-\frac{1}{\omega_3}}) - \frac{\rme^{-\frac{1}{\omega_3}}}{\omega_3} (\alpha r_3 + \xi).
\end{aligned}\ee
 System \eqref{eq:kap3_reduced1} has an unstable improper node in:  
\be \label{eq:Q^4} Q^7:= \quad (\omega_3,r_3) = (0,\xi),\ee
with double eigenvalue $\xi$ and  single eigenvector $(1,0)^T$.
For $w_3=r_3=0$ the quantity $\rme^{-1/\omega_3}/r_3$ in  \eqref{eq:kap3_reduced1} is not well defined.  We deal with this singularity  by first multiplying the right hand side of the vector field by $r_3\omega_3$:
\be \label{eq:kap3_reduced2}
\begin{aligned}
\dot{\omega}_3 &= r_3\omega_3(\xi-r_3)(1-\rme^{-\frac{1}{\omega_3}})+ r_3^2\omega_3^2(1-\rme^{-\frac{1}{\omega_3}}) + \omega_3\rme^{-\frac{1}{\omega_3}}(\alpha r_3 + \xi) ,\\
\dot{r}_3 &=  -r_3^2\omega_3(\xi-r_3)(1-\rme^{-\frac{1}{\omega_3}}) - r_3\rme^{-\frac{1}{\omega_3}} (\alpha r_3 + \xi).
\end{aligned}\ee
Next we introduce the  blow-up transformation:
\be \label{eq:nonstd_blowup}
\omega_3= \rho, \qquad r_3 = \frac{\rme^{-1/\rho}}{\rho} \eta.
 \ee
We substitute \eqref{eq:nonstd_blowup} into \eqref{eq:kap3_reduced2} and we divide by $ \exp(-1/\rho)/\rho$ to obtain the de-singularized vector field: 
\be \label{eq:kappa3blownup} \begin{aligned}
\dot{\rho} &= \xi \rho^2(\eta-1)  + \Or\left( \frac{\eta}{\rho} \rme^{-1/\rho} \right) ,\\
\dot{\eta} &= -\eta \xi (\eta-1)+ \Or\left(\frac{\eta}{\rho} \rme^{-1/\rho} \right).
\end{aligned}\ee

\begin{remark}
The blow-up map \eqref{eq:nonstd_blowup} is non-standard, since it is not written as an algebraic expression in $\rho$. To the author's knowledge there is no former literature   treating blow-ups of the form \eqref{eq:nonstd_blowup} and in particular the approach of   \cite{Kristiansen2015a} does not treat this type of blow-ups. 
\end{remark}
System \eqref{eq:kappa3blownup} has two fixed points. The first fixed point  $O_3:= (\rho,\eta) = (0,0)$  has one unstable direction $(0,1)^T$ associated  with the eigenvalue $\xi$ and one center direction $(1,0)^T$ associated  with the zero eigenvalue.  The second fixed point is: 
\be \label{eq:Q3} 
Q^6:=\quad  (\rho,\eta) = (0, 1),\ee
and it has one stable direction $(0,1)^T$ associated with the eigenvalue $-\xi$ and one center direction  $(1,0)^T$ associated  with the zero eigenvalue.  The axis $\rho=0$ is invariant, thus there exists an heteroclinic connection along the $\eta$-axis between the points $O_3$ and $Q^6$ that we denote by $\gamma_3$, see Figure \ref{fig:reduced_kappa_3}. 
\begin{lemma}\label{lem:backwards_ham}
There exists a unique center-unstable manifold $W^{c,u}$ at the point $Q^6$ that is smooth in $\alpha$ and that contains solutions that decay algebraically to $Q^6$ backwards in time. For $\alpha=\xi$ the set $H=1$  coincides with $W^{c,u}$.
\end{lemma}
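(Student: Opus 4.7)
The plan is to mirror the argument of Lemma \ref{lem:forward_ham}: anchor at the distinguished value $\alpha=\xi$ where the explicit Hamiltonian $H$ produces an invariant curve, identify that curve as a center manifold at $Q^6$, and then propagate the identification in $\alpha$ via the parameter-dependent center manifold theorem. In \eqref{eq:kappa3blownup} the point $Q^6=(\rho,\eta)=(0,1)$ is semi-hyperbolic with eigenvalues $\{0,-\xi\}$: the stable direction $(0,1)^T$ is tangent to $\gamma_3$ and the center direction $(1,0)^T$ is tangent to the $\rho$-axis, so $W^{c,u}$ must be a one-dimensional center manifold of the latter form.

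First I would push $\{H=1\}$ through the successive coordinate changes $y=1/w_1$, $z=z_1/w_1$ (chart $k_1$), then $w_1=r_3\omega_3$, $z_1=-r_3$ (chart $\kappa_3$), and finally $\omega_3=\rho$, $r_3=\eta\rme^{-1/\rho}/\rho$ from \eqref{eq:nonstd_blowup}. Clearing common factors and using the identity $r_3\omega_3\rme^{1/\omega_3}=\eta$, the level set $H=1$ given by \eqref{eq:H_hamilt} reduces to
\[
\frac{1}{\eta}-1+\rme^{-1/\rho}\left(\frac{1}{\rho}+1+\frac{1}{\xi}\right)=0,
\]
which for small $\rho>0$ defines a smooth graph $\eta=h_\xi(\rho)$ with $h_\xi(\rho)\to 1$ as $\rho\to 0^+$ and $h_\xi-1$ transcendentally flat at $\rho=0$. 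Since $\{H=1\}$ is invariant under the Hamiltonian flow, this graph is a concrete invariant curve through $Q^6$ tangent to the center direction, and so identifies $W^{c,u}$ at $\alpha=\xi$.

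Second, I would apply the parameter-dependent center manifold theorem to \eqref{eq:kappa3blownup} (which is $C^\infty$ in $(\rho,\eta,\alpha)$ for $\rho>0$) to extract a local $C^k$ invariant graph $\eta=h(\rho;\alpha)$ tangent to the center direction, jointly smooth in both variables for $\alpha$ near $\xi$. By invariance $h(\cdot;\xi)$ agrees with $h_\xi$ from the previous step. Uniqueness is more delicate than in the classical hyperbolic setting because all formal Taylor coefficients of $h(\rho;\xi)$ at $\rho=0$ vanish; to single out $W^{c,u}$ I would use the fact that, once the leading remainder in \eqref{eq:kappa3blownup} is computed, $\dot\rho$ on any local invariant graph has definite sign near $\rho=0$, characterizing $W^{c,u}$ as the unique such graph whose backward orbits remain in a neighbourhood of $Q^6$.

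The algebraic backward decay finally follows by substituting $\eta=h(\rho;\alpha)$ into the first equation of \eqref{eq:kappa3blownup} and undoing the chain of time rescalings produced by the successive blow-ups; in the appropriate intermediate time variable the reduced dynamics gives $\rho\sim c/|t|$ as $t\to-\infty$, paralleling the computation for $W^{c,s}$ at $Q^3$ in chart $k_3$ underlying the remark after Lemma \ref{lem:forward_ham}. I expect the hardest step to be this last one: the non-standard blow-up \eqref{eq:nonstd_blowup} produces cumulative rescalings that must be tracked carefully, and pinning down both the sign of $\dot\rho$ on $W^{c,u}$ and the precise decay rate requires controlling the $O(\eta\rme^{-1/\rho}/\rho)$ remainder in \eqref{eq:kappa3blownup} rather than just its order of magnitude.
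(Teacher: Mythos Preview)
Your proposal is correct and follows essentially the same route as the paper: rewrite $H=1$ in the $(\rho,\eta)$ coordinates to obtain the identical implicit equation, observe that $\rho\to 0$ forces $\eta\to 1$ so that the level set furnishes an invariant curve through $Q^6$ tangent to the center direction, and then invoke smooth dependence on $\alpha$ of the center manifold. The paper's proof is in fact terser than yours---it does not spell out the uniqueness argument beyond asserting ``saddle-like behaviour'', nor does it track the time rescalings to verify the algebraic decay---so your additional care on those two points is warranted rather than superfluous.
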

\begin{proof}
We rewrite the Hamiltonian \eqref{eq:H_hamilt} in the $(\rho,\eta)$ coordinates and  then insert the condition $H=1$ to obtain the implicit equation:
\be \label{eq:Hamilt_chart3} \frac{1}{\eta} -1 + \rme^{-\frac{1}{\rho}}\left(\frac{1}{\rho} + 1 + \frac{1}{\xi}\right) =0. \ee
Here $\rho\to0$ gives $\eta \to 1$. Therefore $Q^6$ has a saddle-like behaviour with a unique center-unstable manifold   $W^{c,u}$ that is tangent to $(1,0)^T$ in $Q^6$. The invariant manifold $W^{c,u}$  is smooth in $\alpha$ and it maintains the center-unstable properties for small variation of $\alpha$ from $\alpha=\xi$.
\qed\end{proof}
The orbit entering from chart $\kappa_2$ in the point $p_{3,\text{in}} := \kappa_{23}(p_{2,\text{out}}) = (\rho,\eta) = (\delta,0)$   is connected through the stable and the unstable manifolds of $O_3$ to the point $p_{3,\text{out}}$ on $W^{c,u}$ with  $\omega_3 = \delta$ as shown in Figure \ref{fig:reduced_kappa_3}.

\begin{remark}
We observe that the singularity at the origin of chart $k_1$  \eqref{eq:compactified_reduced_K1}, upon blow-ups   \eqref{eq:blowup_compactification} and \eqref{eq:nonstd_blowup}, has turned into three hyperbolic fixed points $O_1, O_3$ and $Q^6$. After the blow-down we obtain the singular structure  depicted in Figure \ref{fig:reduced_all_infinity}.
\end{remark}
\begin{figure}[t!]\centering
 \begin{subfigure}{.28\textwidth}
  \centering 
  \caption{ }\label{fig:reduced_kappa_1}
\includegraphics[width=\linewidth]{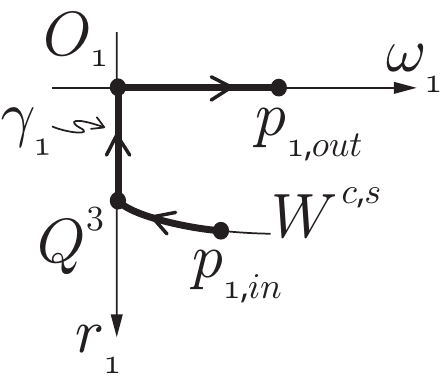}
\end{subfigure}\qquad
 \begin{subfigure}{.35\textwidth}
  \centering 
  \vspace{-1.4cm}
  \caption{ }\label{fig:reduced_kappa_2}
\includegraphics[width=\linewidth]{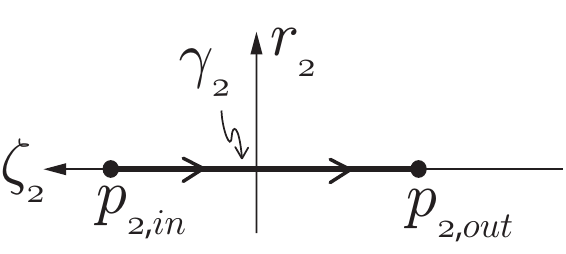}
\end{subfigure}\qquad
\begin{subfigure}{.28\textwidth}
  \centering 
  \caption{ }\label{fig:reduced_kappa_3}
\includegraphics[width=\linewidth]{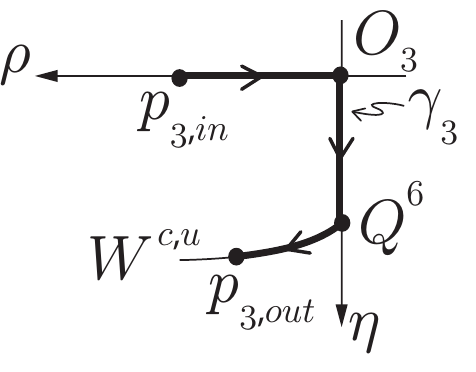}
\end{subfigure}
\begin{subfigure}{0.6\textwidth}
  \centering 
  \caption{}\label{fig:reduced_all_infinity}
\includegraphics[width=0.6\linewidth]{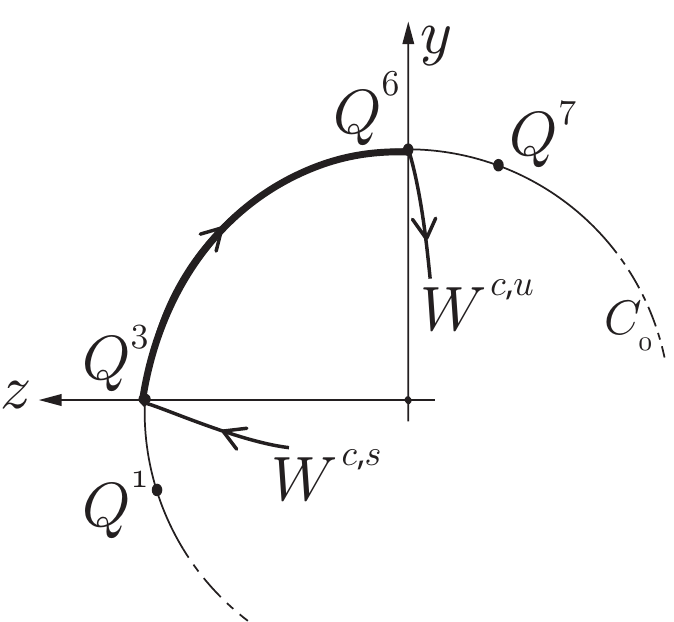}
\end{subfigure}
\caption{Blow-up of \eqref{eq:compactified_reduced_K1}  in chart $k_1$. \subref{fig:reduced_kappa_1}, \subref{fig:reduced_kappa_2} and \subref{fig:reduced_kappa_3} represent charts $\kappa_1,\kappa_2$ and $\kappa_3$ respectively. In \subref{fig:reduced_all_infinity}: behaviour at infinity after the blow-down.  }\label{fig:proof_bifurcation}
\end{figure}

\subsection{The reduced problem on $\mathcal{S}^{2,+}$ }\label{sec:infinity_to_reduced}
The previous analysis has described the phase space of \eqref{eq:reduced_problem} near infinity.  In the following we analyse the interaction of the unbounded solutions of the reduced problem \eqref{eq:reduced_problem} with the fixed points $Q^{1,3,6,7}$ for variations of the parameter $\alpha$.                                                                                                                                                                                                                                                                                                                                                                                               
 We follow the Melnikov-type approach of \citename{Chow94}  \citeyear{Chow94}, to describe how the closed orbits of the Hamiltonian system \eqref{eq:hamiltonian_sys} break up near $\alpha=\xi$. \\
When $\alpha=\xi$  any bounded trajectory of \eqref{eq:hamiltonian_sys}  with  $H=h, \,\,h\in(0,1)$,  intersects the $y$-axis in the two points $D,d$ that correspond to the two real roots of the Lambert equation \eqref{eq:lambert}. We denote by $D$  the root with $y>0$ while we denote by $d$  the one with $y<0$, see Figure  \ref{fig:perturbation_hamiltonian}.\\
For $\alpha-\xi$ small, we compute the forward and backwards orbits  $\gamma^+(t)$ and $\gamma^-(t)$ respectively emanating from  $D$. The transversality condition \eqref{eq:transversality} assures that  $\gamma^+(t)$ and $\gamma^-(t)$  cross the $y$-axis for the first time  in the points $d^+$ and $d^-$ respectively.  Hence we define the distance function:
\be \label{eq:distance_perturbation} 
\begin{aligned}
\Delta(\alpha)&=H(d^+)-H(d^-) ,\\
&= \int_0^{T^+} \dot{H}(\gamma^+(t))\,\rmd t+ \int_{T^-}^0 \dot{H}(\gamma^-(t))\,\rmd t,\\
&= \int_0^{T^+} \nabla{H(h)}\cdot f_0(y,z;\alpha)\,\rmd t+ \int_{T^-}^0  \nabla{H(h)}\cdot f_0(y,z;\alpha)\,\rmd t,
\end{aligned}\ee
where $T^\pm=T^\pm(\alpha)\gtrless 0$ is the flow-time between $D$ and $d^+$  and between  $D$ and  $d^-$ respectively. 
 \begin{figure}\centering
 \begin{subfigure}{.4\textwidth}
  \centering 
  \caption{ }\label{fig:perturbation_hamiltonian}
\includegraphics[width=0.87\linewidth]{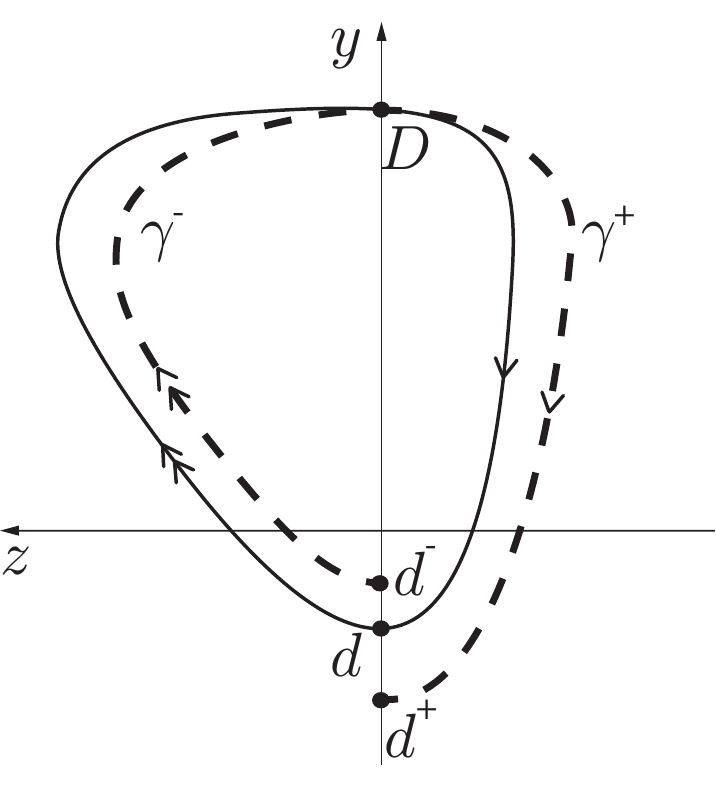}
\end{subfigure}
\quad
\begin{subfigure}{.4\textwidth}
  \centering 
  \caption{  }\label{fig:perturbation_h1}
\includegraphics[width=\linewidth]{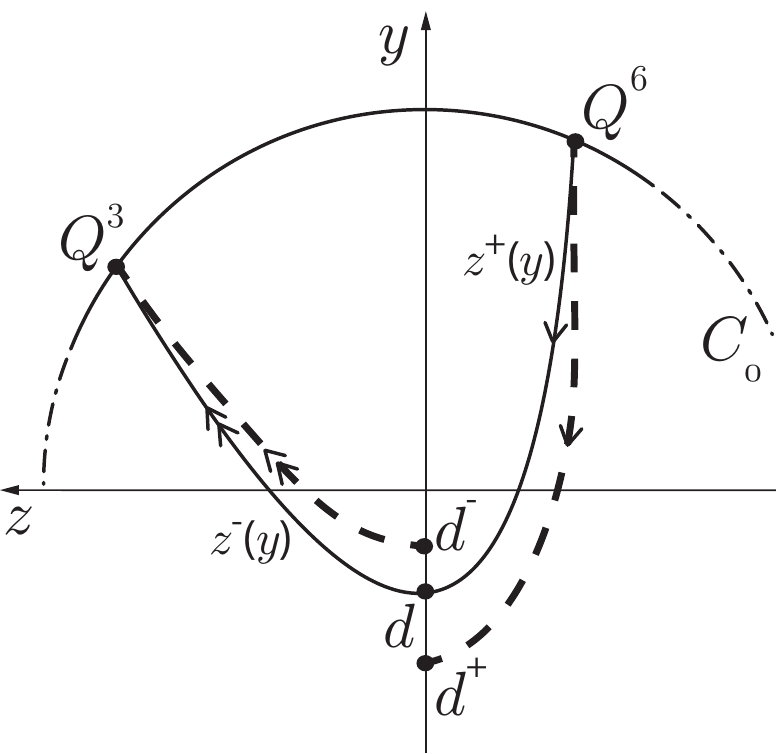}
\end{subfigure}
\caption{Perturbation of the Hamiltonian solutions for $\alpha-\xi$ small. In \subref{fig:perturbation_hamiltonian}:  closed orbit with $0<H<1$. In \subref{fig:perturbation_h1}: heteroclinic connection for $H=1$. }\label{fig:perturbation}
\end{figure}
We Taylor expand \eqref{eq:distance_perturbation} around $\alpha=\xi$:
\be \label{eq:taylor_expand} 
\Delta(\alpha) = (\alpha-\xi)\Delta_{\alpha}(h) + {\Or((\alpha-\xi)^2),}
\ee
with the quantity $\Delta_{\alpha}(h)$ defined as:
\be\label{eq:dalpha} 
\begin{aligned}
  \Delta_{\alpha}(h) &= \int_{T_h^-}^{T_h^+} \nabla H(h)\cdot \frac{\partial f_0}{\partial \alpha}(y,z; \xi)\, \rmd t  \\
  &= \int_{T_h^-,}^{T_h^+}   \xi \rme^{-\xi y} z(\rme^{z}-1)  \, \rmd t.
  \end{aligned}\ee
In \eqref{eq:dalpha} we have denoted with $(y,z)(t)$   the solution of \eqref{eq:hamiltonian_sys} for $H=h$ and $\alpha=\xi$. The times $T_h^\pm=T_h^\pm(\xi)$ are the forward and backwards times from $D$ to $d$.  The integrand of \eqref{eq:dalpha} is always positive for $z\neq0$ and therefore $\Delta_\alpha(h)$ is positive for any $h\in(0,1)$.
We conclude from \eqref{eq:taylor_expand}   that the forward flow $\gamma^+(t)$ spirals outwards for $\alpha>\xi$ while it spirals inwards for $\alpha<\xi$, in agreement with Figure \ref{fig:bifurcation_alpha}. \\
We now extend the analysis above to the case of $H=1$. In this case the points $d^{+}$ and $d^{-}$ are the intersections of $W^{c,u}$ and $W^{c,s}$ with the $y$-axis respectively, see Figure \ref{fig:perturbation_h1}. From the analysis above we know that   $W^{c,s}$ and $ W^{c,u}$ depend smoothly on $\alpha$.
\begin{lemma}\label{lem:ham_compact}
For $\alpha=\xi$ there is a unique heteroclinic connection between $Q^3$ and $Q^6$ on $C_0$. This connection is through the manifolds $W^{c,s}$ and $W^{c,u}$ and it corresponds to the set $H=1$ in  \eqref{eq:H_hamilt}.  This set can be written as the union of two graphs  $z=z^\pm(y)$ (see Figure \ref{fig:perturbation_h1}) with $y\geq-1/\xi$ so that $z^-(y)\,\, \left(z^+(y) \text{ resp.}\right)$ approaches $Q^3 \,\,\left(Q^6\right)$ as $z^- = \Or(y)\,\, \left(z^+ = \Or\left(\ln(y)\right)\, \right)$ for $y\to \infty$.
 \end{lemma}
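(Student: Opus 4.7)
The plan is to compute the level set $\{H = 1\}$ explicitly. Setting $H=1$ in \eqref{eq:H_hamilt} and using that $\rme^{-\xi y} > 0$, the equation reduces to
\begin{equation*}
F(z) := z + \rme^{-z} = y + 1 + \xi^{-1}.
\end{equation*}
A direct computation gives $F'(z) = 1 - \rme^{-z}$ and $F''(z) = \rme^{-z} > 0$, so $F$ is strictly convex with global minimum $F(0) = 1$. Therefore $\{H = 1\}$ is empty for $y < -1/\xi$, reduces to the single point $(-1/\xi, 0)$ for $y = -1/\xi$, and for $y > -1/\xi$ splits by the implicit function theorem into two smooth branches $z = z^-(y) > 0$ and $z = z^+(y) < 0$ meeting tangentially at $(-1/\xi, 0)$, consistent with Figure \ref{fig:perturbation_h1}.

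Next I would extract the asymptotics as $y \to +\infty$ by dominant balance. On the upper branch $\rme^{-z^-} \to 0$ super-exponentially, so the equation yields $z^-(y) = y + 1 + \xi^{-1} + \Or(\rme^{-y})$; in particular $z^- = \Or(y)$ with $z^-/y \to 1$, and translating to chart $k_3$ via $y_3 = y/z$, $w_3 = 1/z$ shows the branch tends to $(w_3, y_3) = (0,1) = Q^3$. On the lower branch the $\rme^{-z^+}$ term dominates, so $-z^+ = \ln(y + 1 + \xi^{-1} - z^+) = \ln y + o(1)$, giving $z^+ = \Or(\ln y)$. In chart $k_1$ this reads $w_1 = 1/y$ and $z_1 = z^+/y \sim -(\ln y)/y$; substituting into the nested blow-ups \eqref{eq:kappa3} and \eqref{eq:nonstd_blowup}, the relation $w_1 = \rme^{-1/\rho}$ forces $\rho = 1/\ln y \to 0^+$, and then $r_3 = -z_1 = \rho\,\rme^{-1/\rho}$ forces $\eta \to 1$. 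Hence this branch approaches $Q^6 = (\rho, \eta) = (0,1)$.

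Finally, the two branches together with their meeting point form a single invariant curve of the Hamiltonian flow, since $H$ is a first integral. Because $\dot y = \rme^z - 1$ has a definite sign on each branch, the curve is traversed once in forward time: from $Q^6$ along $z < 0$ (where $\dot y < 0$), through the transversal crossing at $(-1/\xi, 0)$ (at which $\dot z = 1 > 0$), and onwards along $z > 0$ up to $Q^3$. By Lemmas \ref{lem:forward_ham} and \ref{lem:backwards_ham} the unbounded ends of the two branches coincide locally with $W^{c,s}(Q^3)$ and $W^{c,u}(Q^6)$ respectively, so the connection passes through these manifolds; uniqueness of the heteroclinic then follows from $\{H=1\}$ consisting of exactly these two branches and the meeting point. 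The main technical point I anticipate is the identification of the logarithmic branch with $Q^6$: since $Q^6$ sits inside the non-standard transcendental blow-up \eqref{eq:nonstd_blowup}, this identification requires tracking $\rho$ and $\eta$ through the blow-up rather than a polynomial change of coordinates; away from $Q^6$ the argument is a straightforward level-set analysis.
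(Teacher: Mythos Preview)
Your proposal is correct and follows the same approach as the paper's (very brief) proof: both analyse the level set $H=1$ and identify the two branches' limits at infinity with $Q^3$ and $Q^6$, the paper by simply blowing down the chart equations already obtained in Lemmas~\ref{lem:forward_ham} and~\ref{lem:backwards_ham}, you more explicitly via dominant balance in the $(y,z)$ plane. One minor slip: in the non-standard blow-up \eqref{eq:nonstd_blowup} the relation is $r_3 = \eta\,\rme^{-1/\rho}/\rho$, not $\rho\,\rme^{-1/\rho}$, but your conclusion $\eta\to 1$ is unaffected.
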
 
 \begin{proof}
 We rewrite the trajectory $H=1$ as the graphs $z=z^\pm(y)$ for $y \geq -1/\xi$. The behaviour in forward time follows by considering the point $p_{1,\text{in}}$ in condition \eqref{eq:Hamilt_Chart3} and blowing it down to the original variables $(y,z)$. Similarly for the  behaviour in backwards time by considering $p_{3,\text{out}}$ in condition \eqref{eq:Hamilt_chart3}. 
\qed \end{proof}
Figure \ref{fig:alpha_equal_xi} follows from Lemma \ref{lem:ham_compact}.  When $\alpha=\xi$ the manifolds $W^{c,s}$ and  $W^{c,u}$  cross the $y$-axis in the point $d:= (y,z)=(-1/\xi,0)$.
 We define the distance function $\Delta(\alpha)$ as in \eqref{eq:distance_perturbation}, we Taylor expand it around $\alpha=\xi$ as in \eqref{eq:taylor_expand} and we define $\Delta_\alpha(1)$ as in \eqref{eq:dalpha}.  
  Since the integrand of \eref{eq:dalpha} is positive for $H=1$ we just need to show that  the improper integral \eqref{eq:dalpha} exists. 
From the reduced problem \eqref{eq:reduced_problem} we observe that $\dot{y} =\rme^{z}-1$, thus we rewrite \eqref{eq:dalpha}  with respect to $y$ as:
\be \label{eq:integral_h1_parametrized} \Delta_{\alpha}(1)= \int_{-1/\xi}^{+\infty}   \xi \rme^{-\xi y} z^{-}(y)\,\rmd y  - \int_{-1/\xi}^{+\infty}   \xi \rme^{-\xi y} z^{+}(y)\,\rmd y .
\ee
Recall from Lemma \ref{lem:ham_compact}  that $z^-(y)$ is asymptotically linear in $y$ for $y\to\infty$, while $z^+(y)$ decreases logarithmically  with respect to $y$. The expression  \eqref{eq:integral_h1_parametrized} therefore exists because of the exponential decay of  the factor $\exp({-\xi y}) $ and  furthermore it is positive.
We remark that $\Delta_\alpha(h)$ in \eqref{eq:dalpha} converges   to $\Delta_\alpha(1)$  for $h\to1$, since the orbit segment on $C_{0,\infty}$ does not give any contribution to \eqref{eq:integral_h1_parametrized}.\\

\noindent Now we finish the proof of Proposition \ref{theo:bifurcation_alpha} by considering $\alpha$ as in \eqref{eq:alpha_interval}. When $\alpha<\xi$ the set $W^{c,u}$ contracts to the origin, because $\Delta(\alpha)<0$ in \eqref{eq:taylor_expand}. Furthermore the set $W^{c,s}$ is backwards asymptotic to $Q^7$ and acts as a separator between the basin of attraction of the origin and the basin of attraction of $Q^1$. A similar argument covers the case $\alpha>\xi$. 
This concludes the proof of  Proposition \ref{theo:bifurcation_alpha} and justifies  Figures \ref{fig:alpha_less_xi} and \ref{fig:alpha_larger_xi}. 
Therefore no periodic orbit  exists on $C_0$ for $\alpha>\xi$ and $\vare=0$.  
 
\section{Analysis of the perturbed problem for $\vare>0$}\label{sec:perturbed_reduced}
Consider the original problem \eqref{eq:3Dproblem} and $0<\mu<1$ small but fixed. Then the compact manifold:
\be\label{eq:compact_manifold} 
S_0 = \{ (x,y,z)\in C_0 \lvert \quad 0\leq H(y,z)\leq1-\mu\},\ee
is normally hyperbolic for $\vare=0$.
Therefore Fenichel's theory  guarantees that for $\vare$ sufficiently small there exists a locally invariant manifold $S_\vare$ that is $\Or(\vare)$-close to $S_0$ and is diffeomorphic to it. Moreover the flow on $S_\vare$ converges to the flow of the reduced problem \eqref{eq:reduced_problem} for $\vare\to0$.
A computation shows that $S_\vare$ at first order is:
\[ z = -(x+\xi y) + \vare\xi \rme^{-2(x+\xi y)}\left( \alpha(x+\xi y) + \xi(y+1) - \xi \rme^{x+\xi y}\right)+ \Or(\vare^2),\]
hence we have the following vector field $f_\vare(y,z;\alpha,\vare)$  on  $S_\vare$:
\be\label{eq:perturbed_vf} f_\vare(y,z;\alpha,\vare) := \begin{cases}
\dot{y} &= \rme^{z}-1 - \vare\xi\chi\rme^{2z} +\Or(\vare^2),\\
\dot{z} &= \chi-\vare\xi\chi\rme^{2z}(\alpha z- \xi y +\alpha-\xi+1)+\Or(\vare^2),
\end{cases}\ee
with $\chi(y,z)=\alpha z\rme^{z}-\xi y\rme^{z}  - \xi\rme^{z} +\xi$.

\begin{proposition}\label{prop:hopf_perturbed}
Consider the compact manifold $S_0$ defined in \eqref{eq:compact_manifold}. Then $S_0$ perturbs to a locally invariant slow manifold $S_\vare$ for $0<\vare\ll1$. On $S_\vare$ the origin of \eqref{eq:perturbed_vf} undergoes a supercritical Hopf bifurcation for:
\be \label{eq:alpha_hopf} \alpha= \alpha_H:=\xi - \vare \xi^2 + \Or(\vare^2),\ee
with a negative first Lyapunov coefficient: 
\be\label{eq:normal_form_coeff}  a = -\frac{1}{8} \vare\xi^3(1+\xi) + \Or(\vare^2)<0.\ee
Therefore for $\alpha\in\left(\alpha_H, \alpha_H+c(\mu)\vare\right)$ with $c(\mu)$ sufficiently small, there  exists a family of locally unique attracting limit cycles with amplitude of order $\Or\left(\sqrt{-(\alpha-\alpha_H)/a}\right)$.
\end{proposition}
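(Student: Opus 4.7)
The plan is to exploit the Hamiltonian structure at $\vare=0$ from Proposition \ref{prop:hamiltonian} to treat the Lyapunov coefficient as a perturbation in $\vare$, and then apply the standard Andronov--Hopf bifurcation theorem to the planar vector field on $S_\vare$.

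\textbf{Step 1 (Fenichel reduction).} The compact set $S_0$ in \eqref{eq:compact_manifold} is normally hyperbolic by \eqref{eq:normally_attracting}, so Fenichel's theorem yields a locally invariant manifold $S_\vare$, $\Or(\vare)$-close to $S_0$ and diffeomorphic to it. Writing $S_\vare$ as a graph $z=z(x,y,\vare)$, expanding in powers of $\vare$, and inserting the ansatz into $\vare \dot{z}=-\rme^{-z}(y+(x+z)/\xi)$ produces the asymptotic expansion stated before \eqref{eq:perturbed_vf}. Eliminating $x$ via the graph gives the reduced vector field $f_\vare(y,z;\alpha,\vare)$ in \eqref{eq:perturbed_vf}.

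\textbf{Step 2 (Hopf curve).} The origin is a fixed point of $f_\vare$ for every $\vare$. Computing the Jacobian $Df_\vare(0,0;\alpha,\vare)$ from \eqref{eq:perturbed_vf}, one finds $\det = \xi + \Or(\vare)>0$, so the linearization has a pair of complex eigenvalues with frequency $\omega_0 = \sqrt{\xi}+\Or(\vare)$. Setting the trace to zero and solving by the implicit function theorem in $\alpha$ yields the Hopf curve $\alpha=\alpha_H(\vare)$; Taylor expanding gives $\alpha_H = \xi - \vare\xi^2 + \Or(\vare^2)$ as in \eqref{eq:alpha_hopf}. The transversality condition $\partial_\alpha \operatorname{tr} Df_\vare(0,0;\alpha_H,\vare)\ne 0$ follows from \eqref{eq:jacobian_hopf}, which gives $\partial_\alpha \operatorname{tr} = 1+\Or(\vare)$.

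\textbf{Step 3 (First Lyapunov coefficient).} This is the main obstacle. At $\vare=0$, $\alpha=\xi$ the system is Hamiltonian by Proposition \ref{prop:hamiltonian}, so the first Lyapunov coefficient vanishes identically; hence $a=\Or(\vare)$ and we must extract its leading $\vare$-coefficient. Using the standard planar formula (see e.g.\ Kuznetsov),
\[
a = \tfrac{1}{16}\bigl(F_{yyy}+F_{yzz}+G_{yyz}+G_{zzz}\bigr) + \tfrac{1}{16\omega_0}\bigl[F_{yz}(F_{yy}+F_{zz}) - G_{yz}(G_{yy}+G_{zz}) - F_{yy}G_{yy}+F_{zz}G_{zz}\bigr],
\]
where $(F,G)$ denotes the components of $f_\vare$ at $\alpha=\alpha_H(\vare)$ (after a linear change to bring the Jacobian to canonical form $\bigl(\begin{smallmatrix}0&-\omega_0\\\omega_0&0\end{smallmatrix}\bigr)$), each term must be evaluated to leading order in $\vare$. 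The quadratic and cubic Taylor coefficients of the Hamiltonian part contribute to $a$ with sum zero by Proposition \ref{prop:hamiltonian}, so only the $\Or(\vare)$ terms coming from $\vare\xi\chi\rme^{2z}$ survive. Collecting these and simplifying gives $a = -\tfrac{1}{8}\vare\xi^3(1+\xi)+\Or(\vare^2)<0$. The principal difficulty is purely computational bookkeeping: the linear change of variables depends on $\vare$ through $\alpha_H(\vare)$, so one must carefully track which contributions are cubic in the Hamiltonian part (summing to zero) and which are genuinely $\Or(\vare)$.

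\textbf{Step 4 (Conclusion).} Since the transversality holds and $a<0$, the Andronov--Hopf theorem produces a locally unique one-parameter family of attracting limit cycles bifurcating for $\alpha>\alpha_H$ with amplitude $\Or\!\bigl(\sqrt{-(\alpha-\alpha_H)/a}\bigr)=\Or\!\bigl(\sqrt{(\alpha-\alpha_H)/\vare}\bigr)$. Restricting $\alpha-\alpha_H<c(\mu)\vare$ keeps the cycle inside the region $\{H\le 1-\mu\}$ on which the Fenichel manifold $S_\vare$ is defined, so the reduction in Step 1 is valid throughout.
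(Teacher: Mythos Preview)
Your proposal is correct and is precisely the ``straightforward computation'' the paper alludes to in its one-line proof; you have simply made explicit the Fenichel reduction, the trace-zero condition for $\alpha_H$, and the standard first-Lyapunov-coefficient formula, together with the key observation that the Hamiltonian structure at $\vare=0$ forces $a=\Or(\vare)$.
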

 The proof of Proposition \ref{prop:hopf_perturbed}  follows from straightforward computations. We remark that since \eqref{eq:normal_form_coeff} is proportional to $\vare$, it follows that the results of Proposition \ref{prop:hopf_perturbed} are valid only for a very small interval of $ \alpha$ around $ \alpha_H$.
We use  the  analysis of section \ref{sec:infinity_to_reduced}  to extend the small limit cycles  of Proposition \ref{prop:hopf_perturbed} into   larger ones.

\begin{proposition}\label{cor:extension_epsilon}
Consider the slow manifold $S_\vare$ of Proposition  \ref{prop:hopf_perturbed}.   On $S_\vare$ there exists a  family of closed periodic orbits for 
\be\label{eq:alpha_melnikov}  \alpha=\alpha_M(h):=\xi - \vare \frac{\Delta_\vare(h)}{\Delta_\alpha(h)} + \Or(\vare^2),\ee
where $h\in[c_1(\mu),1-c_2(\mu)]$ with $(c_1,c_2)(\mu)$ small. The quantity $\mathit{\Delta}_\vare(h)$ is defined as:
\be\label{eq:delta_epsilon}
\Delta_{\vare}(h) = \int_{T_h^-}^{T_h^+} \nabla H(h)\cdot \frac{\partial f_\vare}{\partial \vare}(y,z; \xi,0)\, \rmd t,\ee
while $\Delta_\alpha(h)>0$ was  defined in \eqref{eq:dalpha}.
\end{proposition}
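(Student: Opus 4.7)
The plan is to extend the Melnikov argument of Section \ref{sec:infinity_to_reduced} by treating $\vare$ as a second small parameter on equal footing with $\alpha-\xi$. For $h \in (0,1-\mu)$, recall from the Remark after \eqref{eq:transversality} that there is a unique point $D(h)$ on the positive $y$-axis with $H(D(h)) = h$. Using the Fenichel diffeomorphism $S_0 \to S_\vare$, I lift $D(h)$ to a point $D_\vare(h) \in S_\vare$ and then follow the forward and backward trajectories of $f_\vare(\,\cdot\,;\alpha,\vare)$ emanating from $D_\vare(h)$ until their first intersections with $\{z=0\}$, at points $d^+(h,\alpha,\vare)$ and $d^-(h,\alpha,\vare)$. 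Since transversality of the Hamiltonian flow to the $y$-axis is uniform on any compact region bounded away from the origin (cf.\ \eqref{eq:transversality}) and $f_\vare$ depends smoothly on $(\alpha,\vare)$, the points $d^\pm$ and the associated flow times $T^\pm(h,\alpha,\vare)$ are smooth for $\vare$ small and $\alpha$ close to $\xi$. Define the displacement
\[
\Delta(h,\alpha,\vare) := H(d^+(h,\alpha,\vare)) - H(d^-(h,\alpha,\vare)),
\]
so that closed orbits of $f_\vare$ on $S_\vare$ in this region correspond precisely to zeros of $\Delta$.

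Since $f_0(\,\cdot\,;\xi)$ is Hamiltonian and $D(h)$ lies on the closed orbit $\{H=h\}$, we have $\Delta(h,\xi,0) \equiv 0$ for all admissible $h$. A Taylor expansion at $(\alpha,\vare)=(\xi,0)$ therefore yields
\[
\Delta(h,\alpha,\vare) = (\alpha-\xi)\,\Delta_\alpha(h) + \vare\,\Delta_\vare(h) + O\!\bigl((\alpha-\xi)^2 + (\alpha-\xi)\vare + \vare^2\bigr),
\]
where the coefficients are computed exactly as in \eqref{eq:distance_perturbation}--\eqref{eq:dalpha}, by writing $\dot H = \nabla H \cdot f_\vare$, integrating along the unperturbed Hamiltonian orbit from $T_h^-$ to $T_h^+$, and differentiating at $(\xi,0)$. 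The $\partial_\alpha$-derivative reproduces \eqref{eq:dalpha}, while the $\partial_\vare$-derivative is precisely the integral \eqref{eq:delta_epsilon}. Since Section \ref{sec:infinity_to_reduced} established $\Delta_\alpha(h) > 0$, and this positivity is uniform on any compact subinterval $[c_1(\mu), 1-c_2(\mu)] \subset (0,1)$, the implicit function theorem applied to $\Delta=0$ produces a unique smooth branch
\[
\alpha = \alpha_M(h,\vare) = \xi - \vare\,\frac{\Delta_\vare(h)}{\Delta_\alpha(h)} + O(\vare^2),
\]
which is the asserted family of closed orbits on $S_\vare$.

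The main obstacle I anticipate is uniformity in $h$ as $h \to 1^-$. The Fenichel manifold $S_\vare$ perturbs only the \emph{compact} piece $S_0$, and correspondingly the Hamiltonian orbit $\{H=h\}$ escapes to infinity as $h \to 1^-$ along the separatrix of Lemma \ref{lem:ham_compact}, with $z^-(y)$ growing linearly in $y$. Consequently the flow times $T_h^\pm$ and the remainder terms in the Taylor expansion are no longer uniformly bounded, which is exactly why the restriction $h \leq 1-c_2(\mu)$ must be imposed; the lower cut-off $h \geq c_1(\mu)$ simultaneously keeps us away from the Hopf point so that the periodic orbit is not degenerate and matches up consistently with the small-amplitude cycles of Proposition \ref{prop:hopf_perturbed}. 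Apart from this compactness bookkeeping, the remaining ingredients---smoothness of $S_\vare$ in $\vare$ from Fenichel's theorem, the first-variation identity for $H$, and convergence of the defining integrals on the chosen compact range of $h$---are standard.
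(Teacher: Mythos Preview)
Your proposal is correct and follows essentially the same approach as the paper: define the two-parameter displacement $\Delta(h,\alpha,\vare)$ via first hits on $\{z=0\}$, Taylor expand at $(\alpha,\vare)=(\xi,0)$ to get $(\alpha-\xi)\Delta_\alpha(h)+\vare\Delta_\vare(h)+\Or((\alpha-\xi+\vare)^2)$, and apply the implicit function theorem using $\Delta_\alpha(h)>0$. Your discussion of why the $h$-interval must be cut off near $0$ and $1$ is in fact more explicit than the paper's own proof, which simply invokes Fenichel convergence and the Melnikov construction of Section~\ref{sec:infinity_to_reduced} without spelling out the compactness bookkeeping.
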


\begin{proof}
By Fenichel's theorem we know that the flow on $S_\vare$ converges to the flow of the reduced problem \eqref{eq:reduced_problem} for $\vare\to0$. Therefore we can define the distance function $ \Delta(\alpha,\vare)$ similarly to \eqref{eq:distance_perturbation} whose Taylor expansion around $\alpha=\xi$ and $\vare=0$ is: 
\be \label{eq:delta_perturbed_taylor} 
\Delta(\alpha,\vare) =   (\alpha-\xi) \Delta_{\alpha}(h) + \vare\Delta_\vare (h) + \Or((\alpha-\xi+\vare)^2),\ee
with $\Delta_\alpha (h)$ and $\Delta_{\vare}(h)$ defined in \eqref{eq:dalpha} and \eqref{eq:delta_epsilon} respectively. The integrand of $\Delta_\alpha(h)$ is strictly positive for all $h\in(0,1)$, therefore we can apply the implicit function theorem to \eqref{eq:delta_perturbed_taylor} for $\Delta(\alpha,\vare) =0$ and obtain the result  \eqref{eq:alpha_melnikov}.
\qed\end{proof}
In Figure \ref{fig:Melnikov_numerics} we show a numerical computation of the leading order coefficient in \eqref{eq:alpha_melnikov}  for an interval of energies $H=h\in(0,0.6]$. No saddle-node bifurcations occur in this interval and hence the periodic orbits are all asymptotically stable. We expect a similar behaviour for larger values of $h$ but we did not manage to compute this due to the intrinsic slow-fast structure of the reduced problem. It might be possible to study the term $\Delta_\vare(h)/\Delta_\alpha(h)$ analytically by using the results of Lemma \ref{lem:ham_compact}   but the expressions are lengthy and we did not find an easy way.\\
The analysis above can  only explain the limit cycles that appear for $\alpha-\xi=\Or(\vare)$ and it does not justify the limit cycles of Figure \ref{fig:3dnumerics} that appear for larger values of $\alpha-\xi$.
For this reason we proceed to study the  full problem \eqref{eq:3Dproblem} at infinity,  introducing its compactification  through  the Poincar\'e sphere.

 \begin{figure}\centering
 \includegraphics[width=0.4\linewidth]{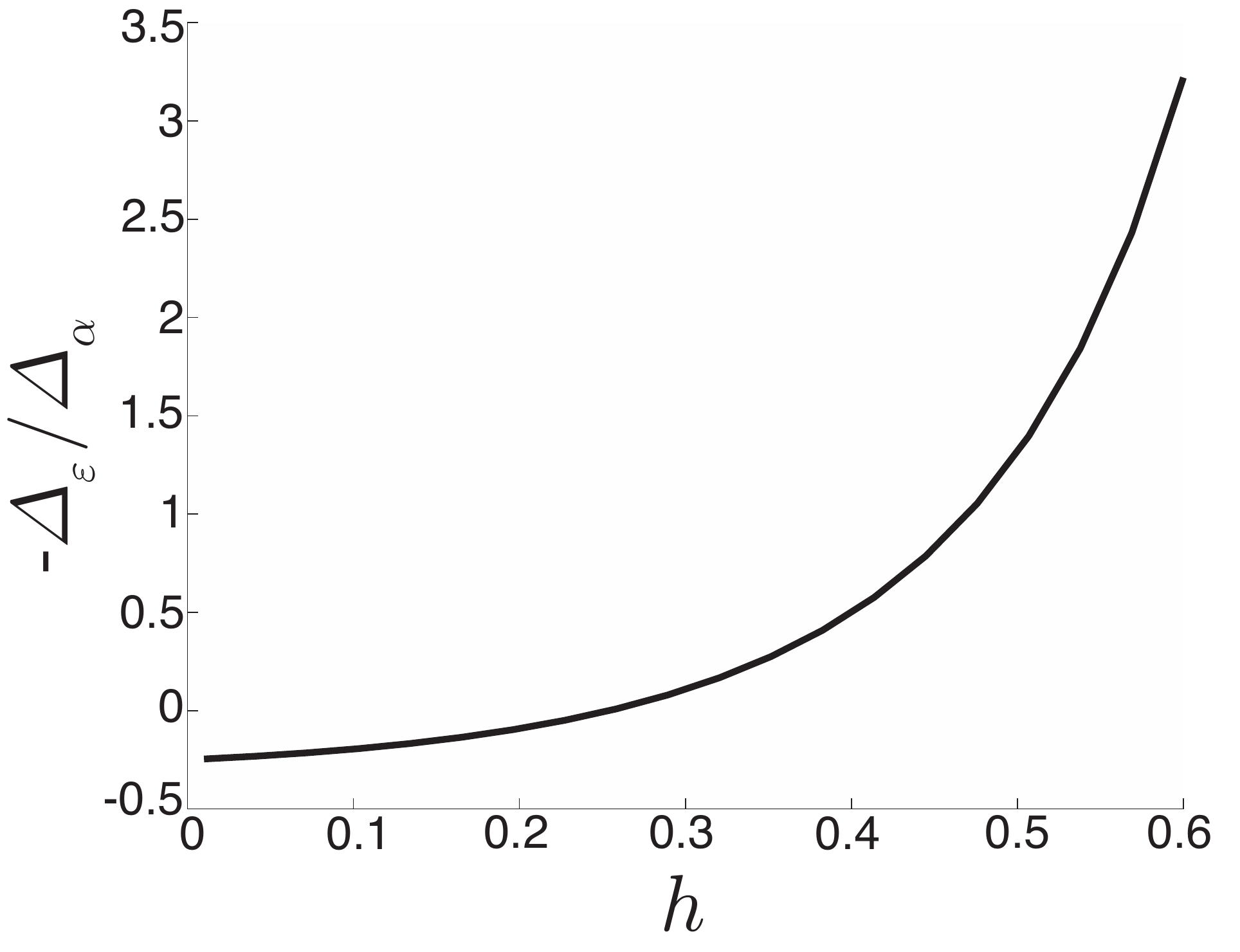}
\caption{Plot of the leading order coefficient in \eqref{eq:alpha_melnikov} for $\xi=0.5$ and $h\in(0,0.6]$. }\label{fig:Melnikov_numerics}
\end{figure}

\section{Statement of the main result}\label{sec:infinity}
 In this section we find a  connection at infinity between the points $Q^1$ and $Q^6$ (recall Proposition \ref{prop:fixed_point_infinity})  that will establish a return mechanism to $C_0$ of the unbounded solutions of \eqref{eq:fast_problem}  when  $\vare=0$ and $\alpha>\xi$. This mechanism will be the foundation for the existence of limit cycles when  $0<\vare\ll1$ and $\alpha-\xi\geq c>0$.  \\
Similar to section \ref{sec:compact_reduced}, we introduce a four-dimensional Poincar\'e sphere $\mathcal{S}^{3,+}$:
\be \label{eq:sphere_def_3d} 
\mathcal{S}^{3,+} := \left\{(X,Y,Z,W)\in \mathbb{R}^4 \bigr|\quad  X^2+Y^2+Z^2+W^2=1, \quad W\geq0\right\}.\ee
  The fast problem \eqref{eq:fast_problem} is interpreted as a directional chart ${K}_2$ on $\mathcal{S}^{3,+}$ defined for $W=1$:
\[ \fl {K}_2:= \mathcal{S}^{3,+} \cap \{W=1\}, \quad x_2 = \frac{X}{W}, y_2 = \frac{Y}{W}, z_2 = \frac{Z}{W},\]
therefore the vector field in chart $K_2$ is obtained by introducing the subscript in \eqref{eq:fast_problem}:
\be\label{eq:fast_problem_2}
\begin{aligned}
\dot{x}_2 &= -\vare \rme^{z_2}(x_2 +(1+\alpha) z_2),\\
\dot{y}_2 &= \vare \left(\rme^{z_2} -1\right),\\
\dot{z}_2 &= -\rme^{-z_2}\left(y_2 + \frac{x_2+z_2}{\xi} \right).
\end{aligned}\ee 
The points at infinity in ${K}_2$  correspond to  $W=0$ which is a sphere ${S}^2$.
We introduce the two  directional charts: 
\begin{subequations}
\label{eq:new_charts_3d}
 \begin{flalign}
 &{K}_3:= \mathcal{S}^{3,+} \cap \{Z=1\}, \quad  x_3 = \frac{X}{Z}, y_3 = \frac{Y}{Z}, w_3 = \frac{W}{Z},\label{eq:R&S_kappa3_3d} \\
&{K}_1:= \mathcal{S}^{3,+} \cap \{Y=1\}, \quad  x_1 = \frac{X}{Y}, z_1 = \frac{Z}{Y}, w_1 = \frac{W}{Y}.\label{eq:R&S_kappa1_3d}
\end{flalign}
\end{subequations}
We have the following transformations between the charts: 
\begin{subequations}
\label{eq:change_charts_3d}
 \begin{flalign}
&{K}_{23}:\quad  w_3= z_2^{-1}, \quad\, x_3 = x_2 z_2^{-1},\quad  y_3 =y_2 z_2^{-1},\label{eq:R&S_kappa23_3d} \\
&{K}_{21}:\quad  w_1 = y_2^{-1}, \quad\, x_1 = x_2 y_2^{-1},  \quad z_1 = z_2y_2^{-1},\label{eq:R&S_kappa21_3d} \\
&{K}_{31}:\quad w_1 = w_3 y_3^{-1}, \, x_1 = x_3 y_3^{-1}, \quad z_1= y_3^{-1},\label{eq:R&S_kappa31_3d} 
\end{flalign}
\end{subequations}
that  are defined for $z_2>0$, $y_2>0$ and $y_3>0$ respectively. The inverse transformations are defined similarly. The three points $Q^1,Q^3\in K_3$ and $Q^6\in K_1$: 
\begin{subequations}
\label{eq:Q1Q2Q3}
 \begin{flalign}
&Q^1 :=\quad  (x_3,y_3,w_3)=(-1,0,0),\label{eq:Q1_3D} \\
&Q^3 :=\quad  (x_3,y_3,w_3)=\left(-1-\alpha,\frac{\alpha}{\xi},0\right),\label{eq:Q2_3D} \\
&Q^6 :=\quad  (x_1,z_1,w_1)=\left(-\xi,0,0\right),\label{eq:Q3_3D} 
\end{flalign}
\end{subequations}
introduced in Proposition \ref{prop:fixed_point_infinity} and the three points $Q^2,Q^4\in K_3$ and $Q^5\in K_1$:
\begin{subequations}
\label{eq:QaQbQc}
 \begin{flalign}
&Q^2 :=\quad  (x_3,y_3,w_3)=(-1-\alpha,0,0),\label{eq:Qa} \\
&Q^4 :=\quad  (x_3,y_3,w_3)=\left(-1-\alpha,\frac{2\alpha}{\xi},0\right),\label{eq:Qb} \\
&Q^5 :=\quad  (x_1,z_1,w_1)=\left(-\frac{\xi}{2\alpha}(1+\alpha),\frac{\xi}{2\alpha}(1-\alpha),0\right),\label{eq:Qc} 
\end{flalign}
\end{subequations}
are going to play a role in the  following, together with the lines:
\begin{subequations}
\label{eq:L0C0infty}
 \begin{flalign}
 L_0 &:=\quad \left\{(x_3,y_3,w_3) \lvert \quad  x_3 + 1+ \alpha  =0,\,\, w_3 =0 \right\},\label{eq:L0_K3} \\
 C_{0,\infty} &:= \quad\left\{(x_3,y_3,w_3) \lvert \quad  x_3 + \xi y_3 + 1  =0,\,\, w_3 =0 \right\}.\label{eq:eq:C0infty_K3} 
\end{flalign}
\end{subequations}
Notice that the line $L_0$ corresponds to the intersection of the family of nullclines  \eqref{eq:L0} with  infinity through  $K_{23}$. We construct the following \textit{singular cycle}:

\begin{definition}
 Let  $\Gamma_0$ be the closed orbit consisting of the points $Q^{1,2,4,5,6}$ and of the union of the following sets:
\begin{itemize}
\item $\gamma^{1,2}$   connecting $Q^1$ with  $Q^2$. In    chart ${K}_3$ the segment $\gamma^{1,2}$  is:
\be \label{eq:gamma_1a}
\gamma^{1,2} := \{ (x_3,y_3,w_3)\in {K}_3 \lvert \quad x_3 \in (-1-\alpha, -1), y_3=0, w_3=0\}.
\ee
\item  $\gamma^{2,4} $  connecting  $Q^2$  with $Q^4$ along $L_0$.  In    chart ${K}_3$ the segment $\gamma^{2,4}$  is:
\be \label{eq:gamma_ab}
\gamma^{2,4} := \left\{ (x_3,y_3,w_3)\in {K}_3 \lvert \quad x_3 = -1-\alpha  , y_3\in \left(0,\frac{2\alpha}{\xi}\right), w_3=0\right\}.
\ee
\item   $\gamma^{4,5}$   connecting $Q^4$   with   $Q^5$. This segment is a fast fiber of \eqref{eq:layer_problem} and in chart ${K}_1$ the segment  $\gamma^{4,5}$  is:
{\small\be \label{eq:gamma_bc}
\gamma^{4,5} := \left\{ (x_1,z_1,w_1)\in {K}_1 \lvert \quad x_1 =  -\frac{\xi}{2\alpha}(1+\alpha)  , z_1\in\left(\frac{\xi}{2\alpha}(1-\alpha), \frac{\xi}{2\alpha}\right), w_1=0\right\}.
\ee}
\item   $\gamma^{5,6}$ connecting  $Q^5$   with  $Q^6$ on $C_{0,\infty}$.  In    chart ${K}_1$ the segment $\gamma^{5,6}$  is:
{\small \be \label{eq:gamma_c3}
\gamma^{5,6} := \left\{ (x_1,z_1,w_1)\in {K}_1 \lvert \quad x_1=-\xi-z_1, z_1 \in \left(0,\frac{\xi}{2\alpha}(1-\alpha)\right),   w_1=0\right\}.
\ee}
\item $W^{c,u}$ connecting $Q^6$ with $Q^1$ on the critical manifold $C_0$.
\end{itemize}
\end{definition}
In section  \ref{sec:proof} we identify $\Gamma_0$ using repeatedly the blow-up method on system \eqref{eq:fast_problem_2}.  Figure \ref{fig:sphere_infinity} shows $\Gamma_0$ and its different segments: \ref{fig:sphere_global}  displays the complete cycle while  \ref{fig:kappa3}  and \ref{fig:kappa1_new}  illustrate the portions of $\Gamma_0$ that are visible in the charts ${K}_3$ and ${K}_1$ respectively. 
 \begin{figure}[h!]\centering
 \begin{subfigure}{\textwidth}
  \centering 
    \caption{}\label{fig:sphere_global}
 \includegraphics[width=0.43\linewidth]{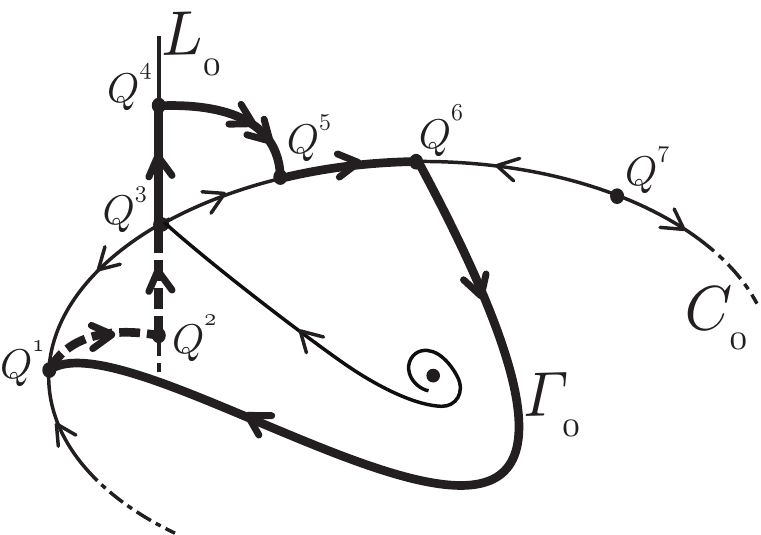}
\end{subfigure}
\quad
\begin{subfigure}{.48\textwidth}
  \centering 
  \caption{}\label{fig:kappa3}
\includegraphics[scale=0.8]{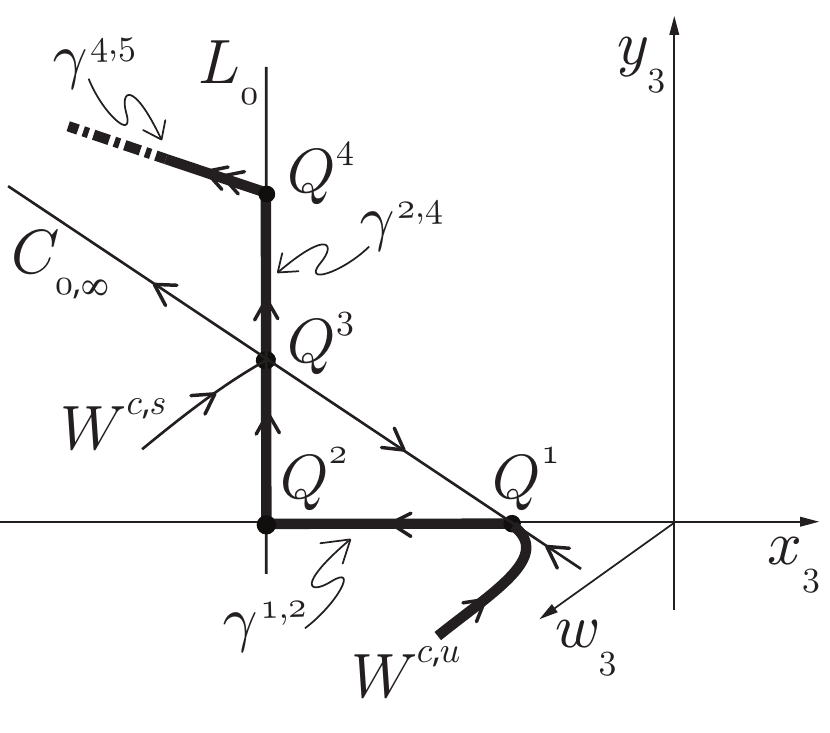}
\end{subfigure}
\begin{subfigure}{.48\textwidth}
  \centering 
  \caption{ }\label{fig:kappa1_new}
\includegraphics[height=5.7cm]{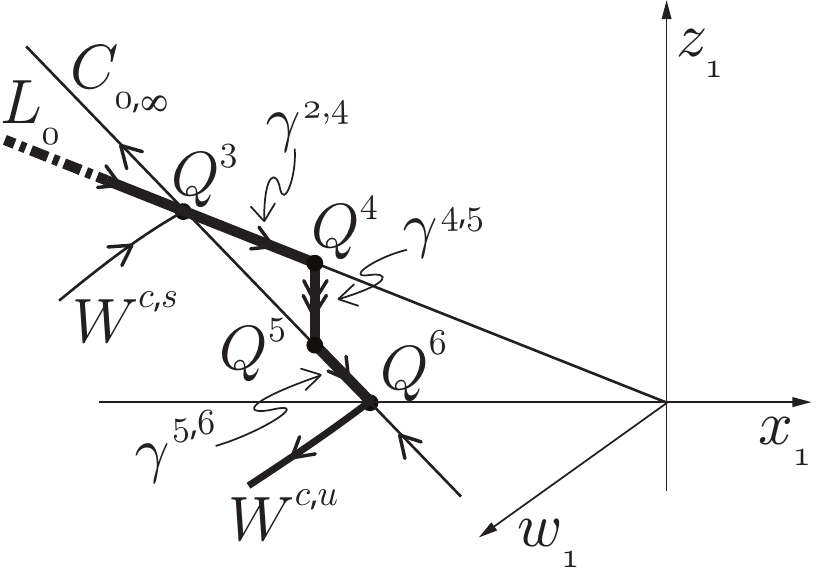}
\end{subfigure}
\caption{Schematisation of $\Gamma_0$ in \subref{fig:sphere_global}. In chart $K_3$ \subref{fig:kappa3} we see the segments $\gamma^{1,2}, \gamma^{2,4}$ and partially $\gamma^{4,5}$. In chart $K_1$ \subref{fig:kappa1_new} we see $\gamma^{4,5}, \gamma^{5,6}$ and partially $\gamma^{2,4}$. }\label{fig:sphere_infinity}
\end{figure}
$\Gamma_0$ plays an important role in our main result, since we conjecture it to be the candidate \textit{singular cycle}:
\begin{conjecture}\label{con:conjecture_thm}
Fix $\alpha>\xi$. Then for $0<\vare\ll1$ there exists an attracting limit cycle $\Gamma_{\vare }$ that converges to  the  \textnormal{singular  cycle} $\Gamma_0$ for $\vare\to0$.
\end{conjecture}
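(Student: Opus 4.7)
The plan is to construct a Poincar\'e return map near $\Gamma_0$ and show, by decomposing it into a composition of transition maps along the five arcs of $\Gamma_0$, that it is a strong contraction for $0<\vare\ll 1$. A convenient transversal section $\Sigma$ can be chosen on the normally hyperbolic interior of $C_0$ intersecting $W^{c,u}$ well away from the points at infinity. I would write the return map as
\be\label{eq:return_decomp_prop}
\Pi_\vare = \Pi^{6,1}_\vare \circ \Pi^{5,6}_\vare \circ \Pi^{4,5}_\vare \circ \Pi^{2,4}_\vare \circ \Pi^{1,2}_\vare \circ \Pi^{\mathrm{slow}}_\vare,
\ee
where $\Pi^{\mathrm{slow}}_\vare$ follows the dynamics on the slow manifold from $\Sigma$ into a neighbourhood of $Q^1$, and each $\Pi^{m,n}_\vare$ covers the corresponding arc between the points $Q^m$ and $Q^n$ at infinity. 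Existence of $\Gamma_\vare$ then reduces to a Banach fixed-point argument for $\Pi_\vare$, and $\Gamma_\vare\to\Gamma_0$ as $\vare\to 0$ follows by showing that, in suitable coordinates, each transition converges uniformly to the corresponding singular transition along $\Gamma_0$.

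On the slow portion connecting $Q^6$ back to the section, the critical manifold $C_0$ is normally hyperbolic and Fenichel's theorem produces an attracting slow manifold $S_\vare$ to which $W^{c,u}$ (Lemma~\ref{lem:backwards_ham}) perturbs $C^r$-smoothly in $\vare$. Away from $C_{0,\infty}$, $\Pi^{\mathrm{slow}}_\vare$ is then exponentially contracting onto $S_\vare$ with rates uniform in $\vare$. The remaining maps live at infinity, where the machinery of Sections~\ref{sec:compact_reduced}--\ref{sec:infinity} must be extended with an extra dimension and an extra blow-up that resolves the singular parameter $\vare$. In particular, around each of the points $Q^1,\dots,Q^6$ I would first apply the Poincar\'e compactification in the charts $K_1,K_2,K_3$ defined in \eqref{eq:new_charts_3d}, then a spherical blow-up analogous to \eqref{eq:blowup_compactification}, a cylindrical blow-up in $\vare$, and finally the nonstandard blow-up \eqref{eq:nonstd_blowup} wherever exponential loss of hyperbolicity is encountered; the framework of \cite{Kristiansen2015a} guarantees hyperbolicity of the key fixed points in the blown-up space. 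With hyperbolicity restored, the $\lambda$-lemma and standard normal-form estimates for hyperbolic saddles/nodes yield explicit asymptotics for each $\Pi^{m,n}_\vare$, producing contraction factors with powers of $\vare$ or $\ln\vare^{-1}$ that combine in \eqref{eq:return_decomp_prop} to give a strong overall contraction.

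The hardest step, I expect, is the analysis at the corner points $Q^2$, $Q^4$ and especially $Q^5$. These are the points where $\Gamma_0$ switches between arcs of very different nature (from the line $y_3=0$ onto the nullcline line $L_0$ at $Q^2$, from $L_0$ onto the fast fiber $\gamma^{4,5}$ at $Q^4$, and from the fast fiber onto the extended critical manifold $C_{0,\infty}$ at $Q^5$). After the basic blow-ups of Section~\ref{subsec:K1} these corners remain nonhyperbolic, and a further $\vare$-dependent blow-up is required to match the weak directions with the $\vare$-scale and to identify the precise center manifolds along which the trajectories proceed. Establishing transversality of these center manifolds with the fast fibers (so that $\gamma^{4,5}$ really is selected at $Q^4$ and the unique drop-off onto $C_{0,\infty}$ occurs at $Q^5$) is the delicate analytical content.

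Once each transition map has been controlled in the appropriate blow-up chart and its asymptotics as $\vare\to 0$ identified with the corresponding singular arc of $\Gamma_0$, the composition \eqref{eq:return_decomp_prop} is a well-defined contraction on $\Sigma$ for all $\vare$ sufficiently small. The Banach fixed-point theorem supplies a unique fixed point, hence a locally unique attracting limit cycle $\Gamma_\vare$, and the uniform control of each transition as $\vare\to 0$ gives Hausdorff convergence $\Gamma_\vare\to\Gamma_0$, completing the proof of Conjecture~\ref{con:conjecture_thm}.
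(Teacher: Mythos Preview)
Your proposal follows essentially the same strategy as the paper's own outline in Section~\ref{sec:global_results}: choose a section transverse to $W^{c,u}$, track a neighbourhood through the successive arcs $\gamma^{1,2},\gamma^{2,4},\gamma^{4,5},\gamma^{5,6}$ and back along $W^{c,u}$ using the blow-ups of Section~\ref{sec:proof}, and conclude by the contraction mapping theorem. Note that the paper does not actually prove Conjecture~\ref{con:conjecture_thm} but only sketches this route (deferring the details to a future manuscript), and its key simplifying observation---that for $\vare=0$ the entire forward flow of the section contracts to the single point $Q^1$---is what supplies the global contraction you are assembling piece by piece.
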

A rigorous proof of Conjecture \ref{con:conjecture_thm} requires an analysis both for $\vare=0$ and $0<\vare\ll1$.  In section \ref{sec:global_results} we outline a  procedure to prove the conjecture and we  leave the full details of the proof to a future manuscript.

\begin{remark}\label{rem:main_result}
Here we collect the results of sections \ref{sec:perturbed_reduced} and \ref{sec:infinity}. When $\vare=0$  and $\alpha=\xi$ then there exists a family of periodic solutions  on $\mathcal{S}^{3,+}$, corresponding to the Hamiltonian orbits with $H\in(0,1)$. For $\alpha>\xi$  only the cycle  $\Gamma_0$  persists. \\
When $0<\vare\ll1$ and $\alpha-\xi =\Or(\vare)$  there exists a limit cycle  resembling the bounded Hamiltonian orbits. For larger values of $\alpha-\xi$ we conjecture that the limit cycle tends to  $\Gamma_0$. Figure \ref{fig:conjecture_maxampl} shows the conjectured bifurcation diagram of the periodic orbits.
\begin{figure}[h!]
\centering
\includegraphics[scale=1.1]{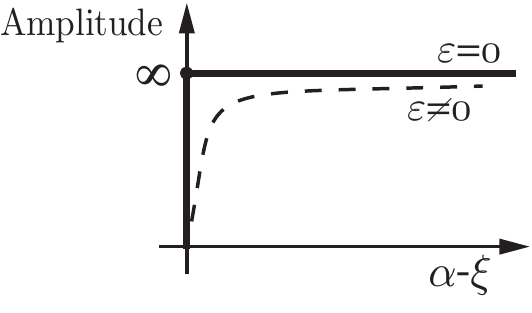}
\caption{ Conjectured bifurcation diagram of the limit cycles for $\vare\ll1$.}\label{fig:conjecture_maxampl}
\end{figure}
\end{remark}
 Figure \ref{fig:closedorbit}  shows some numerical simulations  supporting Conjecture \ref{con:conjecture_thm}: \ref{fig:num_closedorbit} illustrates the limit cycles $\Gamma_\vare$ for three different values of $\vare\in\{10^{-8},10^{-4},10^{-2}\}$ with $\alpha=0.9$ and $\xi=0.5$ while  \ref{fig:num_chartK3} and \ref{fig:numericsK1_3D} show the portions of $\Gamma_\vare$ that appear in the   charts $K_3$ and $K_1$ respectively. The amplitudes of the orbits increase for decreasing values of  the parameter $\vare$ and   both the  plane $C_0$ and the line $L_0$ play an important role. Close to the origin the dynamics evolves on $C_0$ while sufficiently far from the origin     $L_0$ becomes relevant. Indeed  in Figure \ref{fig:num_chartK3} we see that the solutions contract to $L_0$ following $\gamma^{1,2}$ and then they evolve    following $\gamma^{2,4}$.  
When the trajectories  are  close to $Q^4$ they follow $\gamma^{4,5}$ and   contract again  towards $C_0$ along a direction that  tends  to the fast fiber for $\vare\to0$, as we can see in Figure \ref{fig:numericsK1_3D}. 
 \begin{figure}[h!]\centering
 \begin{subfigure}{\textwidth}
  \centering 
  \caption{  }\label{fig:num_closedorbit}
\includegraphics[scale=0.64]{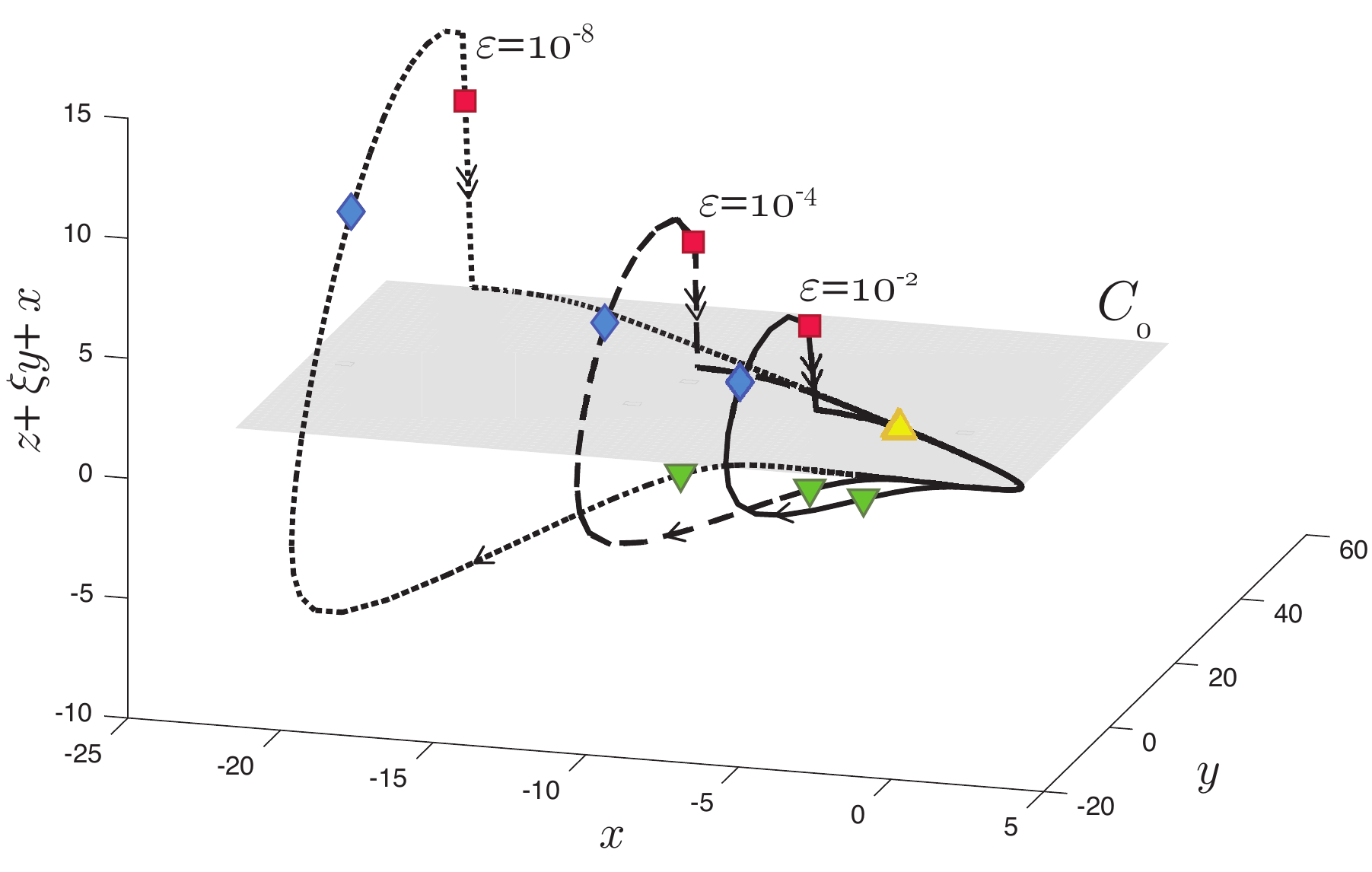}\end{subfigure}\\
    \begin{subfigure}{\textwidth}
  \centering 
  \caption{   }\label{fig:num_chartK3}
\includegraphics[scale=0.53]{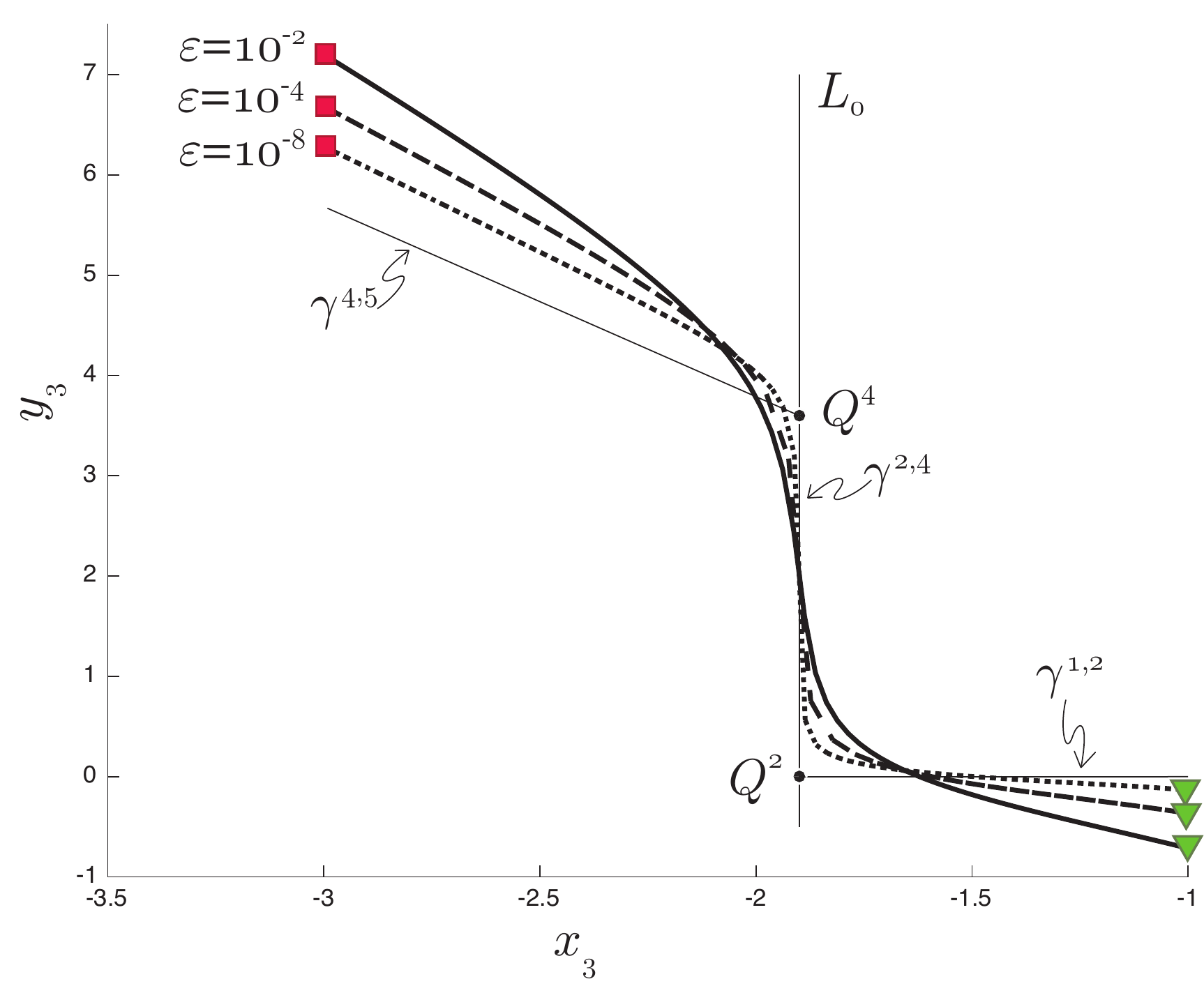}
\end{subfigure}
\end{figure}
\begin{figure}[h!] 
    \ContinuedFloat 
\begin{subfigure}{\textwidth}
  \centering 
  \caption{ }\label{fig:numericsK1_3D}
\includegraphics[scale=0.55]{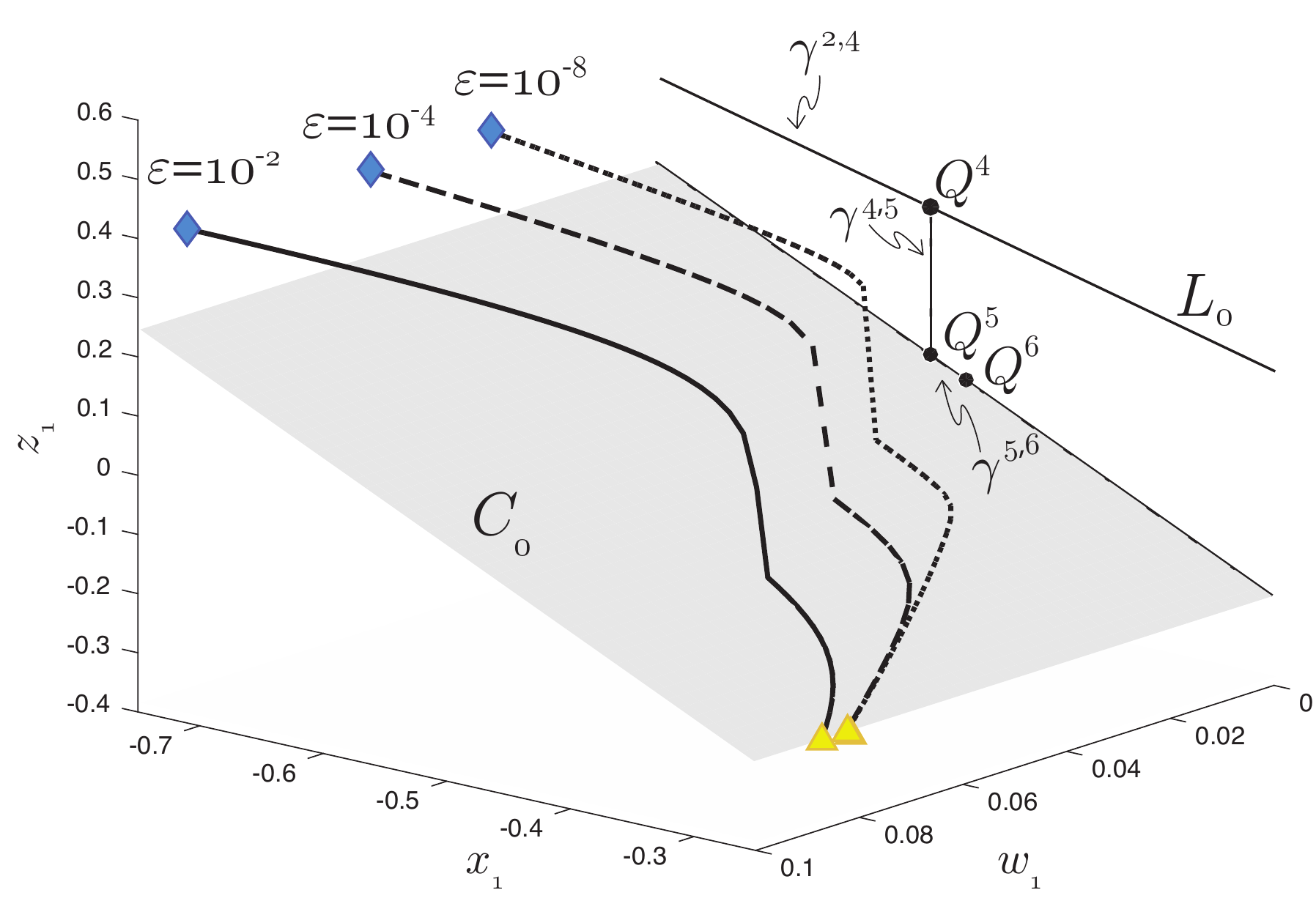}\end{subfigure}
\caption{Figure \subref{fig:num_closedorbit}: numerical simulation of \eqref{eq:3Dproblem} for $\vare\in\{10^{-8},10^{-4},10^{-2}\}$, $\alpha=0.9$ and $\xi=0.5$. In \subref{fig:num_chartK3}: portion of $\Gamma_\vare$ visible in chart $K_3$, i.e. between the green lower triangle and the red square. In \subref{fig:numericsK1_3D}: portion of $\Gamma_\vare$ visible in  $K_1$, i.e. between the blue diamond and the yellow upper triangle. We remark that the portion between the blue triangle and the red square is visible  both in  $K_3$ and $K_1$ since the two charts overlap for $y_3>0$ or $z_1>0$. Colours are available in the online version. }\label{fig:closedorbit}
\end{figure}

\section{Identification of the segments of $\Gamma_0$ at infinity}\label{sec:proof}
In this section we focus on the identification of the segments of $\Gamma_0$   \eqref{eq:gamma_1a}--\eqref{eq:gamma_c3}. We are especially interested in revealing the line $L_0$  and the segments that interact with it. In \ref{sec:dirchart_K3} we study the dynamics along chart $K_3$ and then in  \ref{sec:barkappa1} we consider chart $K_1$. More details are  available in \cite{Bossolini2015a}.
\subsection{Chart ${K}_3$}\label{sec:dirchart_K3}

We obtain the vector field in chart ${K}_3$  by inserting condition \eqref{eq:R&S_kappa3_3d} into the fast problem \eqref{eq:fast_problem_2}. This vector field is de-singularized at $w_3=0$   by division of $\rme^{1/w_3}$. For the sake of readability we drop the subscripts:

\be \label{eq:kap3_small}
\begin{aligned}
\dot{w} &=  w  \rme^{-\frac{2}{w}} \left( y + \frac{x+1}{\xi} \right),\\
\dot{x} &= -\vare (x+1+ \alpha ) + x \rme^{-\frac{2}{w}} \left( y + \frac{x+1}{\xi} \right),\\
\dot{y} &= \vare w(1-\rme^{-\frac{1}{w}})  + y \rme^{-\frac{2}{w}} \left( y + \frac{x+1}{\xi} \right),\\
\dot{\vare} &= 0.
\end{aligned}\ee
System  \eqref{eq:kap3_small} is a four-dimensional vector field defined on $\mathbb{R}^4$ where we treat  $\vare$ as a  variable.  The set $w=\vare=0$   consists of non-hyperbolic fixed points of \eqref{eq:kap3_small} and the two lines $C_{0,\infty}$ and $L_0$ \eqref{eq:L0C0infty} are contained within this set.  Since we consider a regime   of $w$ sufficiently small, we approximate $1 - \rme^{-1/w} \simeq 1 $ in the $y$-equation  of \eqref{eq:kap3_small} to simplify the computations. Qualitatively this has no effects on the results.\\
We blow-up   \eqref{eq:kap3_small} around  $Q^1$ in order to extend the hyperbolicity of   $C_0$ up to infinity. To do so we need to get rid of the  exponential terms. We deal with it  by introducing a new variable $q$:
\be \label{eq:qdef}
q = \rme^{-\frac{2}{w}},\ee
so that the extended system contains only algebraic terms in its variables \cite{Kristiansen2015a}.  Indeed by differentiating   \eqref{eq:qdef} with respect to time we obtain: 
\be\label{eq:qdot}
\begin{aligned}
\dot{q} &= 2w^{-2}\dot{w} \rme^{-\frac{2}{w}},\\
&= 2 w^{-1} q \left(y + \frac{x+1}{\xi}\right),
\end{aligned}\ee
where we have used \eqref{eq:kap3_small} and \eqref{eq:qdef}. Inserting \eqref{eq:qdot} into \eqref{eq:kap3_small} we obtain the five-dimensional vector field:
\be \label{eq:kap3_global}
\begin{aligned}
\dot{w} &=  w^2 q \left( y + \frac{x+1}{\xi} \right),\\
\dot{x} &= -\vare w (x+1+\alpha) + x q  w \left( y + \frac{x+1}{\xi} \right),\\
\dot{y} &= \vare w^2 + y q w\left( y + \frac{x+1}{\xi} \right),\\
\dot{q} &= 2 q^2 \left( y + \frac{x+1}{\xi} \right),\\
\dot{\vare} &= 0.
\end{aligned}\ee
after multiplying the right hand side by $w$.  
The  evolution of $q$ in \eqref{eq:kap3_global} is slaved by $w$ through \eqref{eq:qdef}. However, this dependence is not explicit and we will refer to it only when needed. 
We refer to \cite{Kristiansen2015a} for further details on this approach. 
\noindent System \eqref{eq:kap3_global} has a 3-dimensional space of non-hyperbolic fixed points for $\vare=q=0$, since each  point has a quintuple zero eigenvalue.  To overcome the degeneracy we introduce the blow-up map:
\be \label{eq:blowup_qeps}
\quad q = \bar{r}\bar{q}, \quad \vare = \bar{r}\bar{\epsilon},
\ee
with $(\bar{q},\bar{\epsilon})\in   S^1 $ and $\bar{r}\geq0$ while the variables $(w,x,y)\in \mathbb{R}^3$ in \eqref{eq:kap3_global} are kept unchanged. We remark that the quantity $\vare$ in \eqref{eq:blowup_qeps} is a constant, hence the blown-up space is foliated by invariant hyperbolas.
We  study the  two local charts:
\begin{subequations}
\label{eq:charts_compactification}
 \begin{flalign} 
&\mathcal{K}_1: \quad q = r_1 , \qquad \vare = r_1 \epsilon_1,\label{eq:kappa1bar} \\
&\mathcal{K}_2: \quad q = r_2 q_2 , \qquad \vare = r_2 .\label{eq:kappa2bar}
\end{flalign}
\end{subequations}
Notice that   $q_2=\Or(1)$ in chart $\mathcal{K}_2$ corresponds to $w = \Or (\ln^{-1} \vare^{-1})$ or  $z_2 = \Or(\ln \vare^{-1})$ through \eqref{eq:qdef}. This is  the relevant regime for the na\"ive identification of $L_0$ as in  \eqref{eq:dynamicsinfinity}.
\paragraph{Chart $\mathcal{K}_1$}
To simplify the  analysis we place the $x$-axis of  \eqref{eq:kap3_global} on  $C_0$ by introducing the new coordinate $ \tilde{x} = x  + \xi y  + 1 $
so that    $Q^1$  is now in the origin of chart ${K}_3$.  
We insert  \eqref{eq:kappa1bar} into  \eqref{eq:kap3_global}  and divide out a common  factor of $r_1$ to obtain the de-singularized system in chart $\mathcal{K}_1$. This system is independent of  $r_1$ therefore we restrict the analysis to the remaining four variables $(w,\tilde{x},y,\epsilon_1)$ and we drop the subscript.  
The origin of the reduced system is still degenerate with all zero eigenvalues.  To overcome the degeneracy  we introduce the following blow-up of $C_{0,\infty}$:
\[
w = \bar{r} \bar{w} , \quad \tilde{x} = \bar{r}\bar{\tilde{x}} , \quad \epsilon = \bar{r} \bar{\epsilon},
\]
where $(\bar{w},\bar{\tilde{x}},\bar{\epsilon})\in  {S}^2 $ and $\bar{r}\geq0$ small, while the variable $y\in \mathbb{R}$ is kept unchanged. We study   charts $\mathtt{K}_1$ and $\mathtt{K}_2$  that are defined for $\bar{w}=1$ and  $\bar{\epsilon}=1$ respectively.\\
Chart $\mathtt{K}_1$ has an attracting 3-dimensional center manifold $M_1$ in the origin.  This manifold   is the extension of the slow-manifold $S_\vare$ (see Proposition \ref{prop:hopf_perturbed}) into  chart $\mathtt{K}_1$ when $\vare =\text{const.}$ and $q=\rme^{-2/w}$.
Thus  we can  extend  the hyperbolicity of $C_0$  up to $C_{0,\infty}$  for $\bar{\epsilon}=0$ and   recover  the  contraction to $Q^1$ of Figure \ref{fig:alpha_larger_xi}.  
We follow the unique unstable direction of $Q^1$ that sits on the sphere $\bar{r}=0$. This direction exits chart  $\mathtt{K}_1$ for $\bar{\epsilon}$ large and  contracts to the origin of chart $\mathtt{K}_2$ along the invariant plane $\bar{r}=0$. Using hyperbolic methods we  follow the unique 1-dimensional unstable manifold $\gamma_2^1$ departing from the origin of chart $\mathtt{K}_2$  into chart   $\mathcal{K}_2$, where it enters with $q_2$ small.

\paragraph{Chart $\mathcal{K}_2$} 
We substitute  \eqref{eq:kappa2bar} into \eqref{eq:kap3_global} and divide  the right-hand side by   $r_2$  to obtain the dynamics in chart $\mathcal{K}_2$. The system is independent of  $r_2$ and we restrict the analysis to   $(w,x,y,q_2)$. The unstable manifold  $\gamma_2^1 :=   \left\{ (w,x,y,q_2)\in  \mathcal{K}_2 \lvert   \quad (w,x,y) = (0,-1,0), q_2 \geq 0  \right\}$
 contracts towards the fixed point:
 \be\label{eq:point_fixed}
 (w,x,y,q_2) = ( 0, -1, 0, 0  ).
 \ee
 The point    \eqref{eq:point_fixed}  belongs to a   plane of  non-hyperbolic fixed points with $w=q_2=0$ and to overcome the loss of hyperbolicity  we   introduce the   blow-up map (after having dropped the subscript):
\be \label{eq:blowup_k2_wq}
w = \bar{r} \bar{w} , \quad q = \bar{r}\bar{q} ,
\ee
where $(\bar{w},\bar{q})\in  {S}^1$ and $\bar{r} \geq0$. We study    charts  $\hat{\mathtt{K}}_1$ and  $\hat{\mathtt{K}}_2$ that are defined for $\bar{w}=1$ and $\bar{q}=1$ respectively.

\paragraph{Chart $\hat{\mathtt{K}}_1$}
We insert \eqref{eq:blowup_k2_wq} with $\bar{w}=1$ into the  vector field of chart $\mathcal{K}_2$ and drop the bar. We  divide the system by  $r$ to obtain the de-singularized equations:
\be\label{eq:eq_kappa1hat}
\begin{aligned}
\dot{r} &= r^2 q \left( y+ \frac{x+1}{\xi}\right),\\
\dot{x} &= -(x+1+\alpha)+ xrq \left( y+ \frac{x+1}{\xi}\right),\\
\dot{y} &= r + y rq \left( y+ \frac{x+1}{\xi}\right),\\
\dot{q} &= q^2  (2-r)\left( y+ \frac{x+1}{\xi}\right).
\end{aligned}\ee
In the following important lemma we identify the line $L_0$ and the segment $\gamma^{1,2}$:  
\begin{lemma}\label{lem:lineL0}
In chart $\hat{\mathtt{K}}_1$  there exists an attracting 3-dimensional center manifold:
\be\label{eq:cm_hatkappa1}
x = -1-\alpha + \Or(r+q),
\ee 
whose intersection with the plane $r=q=0$ corresponds to the line $L_0$ \eqref{eq:L0_K3}.
The trajectory $\gamma^{1,2}$ defined in \eqref{eq:gamma_1a} connects  along a stable fiber the point \eqref{eq:point_fixed} to  $Q^2$  \eqref{eq:Qa}. 
\end{lemma}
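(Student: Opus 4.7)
The plan is to apply the center manifold theorem at every point of the line of non-hyperbolic equilibria
\[
L := \{(r,x,y,q) : r = q = 0,\; x = -1-\alpha\}
\]
of the vector field \eqref{eq:eq_kappa1hat} to produce the claimed attracting 3-dimensional invariant manifold, and then to identify the stable fiber $\gamma^{1,2}$ as a distinguished orbit inside the invariant plane $\{r = q = y = 0\}$, on which the dynamics degenerates to a one-dimensional linear contraction.

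First I would compute the linearization of \eqref{eq:eq_kappa1hat} at a generic point $p = (0, -1-\alpha, y_0, 0) \in L$. Every term in \eqref{eq:eq_kappa1hat} carrying a factor of $r$ or $q$ contributes nothing to the Jacobian at $p$, so only the $\dot{x}$ and $\dot{y}$ rows survive, giving
\[
Df(p) = \begin{pmatrix} 0 & 0 & 0 & 0 \\ 0 & -1 & 0 & 0 \\ 1 & 0 & 0 & 0 \\ 0 & 0 & 0 & 0 \end{pmatrix},
\]
with spectrum $\{-1,0,0,0\}$, a one-dimensional stable direction $\partial_x$, and a three-dimensional (Jordan-coupled) center direction. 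Since $L$ is itself tangent to the kernel of $Df$ at each of its points and the non-center eigenvalue is strictly negative and bounded away from $0$ uniformly in $y_0$, the parameterized center manifold theorem furnishes a locally invariant, $C^k$-smooth, attracting 3-dimensional manifold $W^c$ expressible as a graph $x = h(r,y,q)$ with $h(0,y,0) = -1-\alpha$ identically in $y$. Smoothness of $h$ immediately yields the expansion \eqref{eq:cm_hatkappa1}.

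Next I would identify $W^c \cap \{r = q = 0\}$ with the line $L_0$ by tracing the blow-up coordinates back to $K_3$: in chart $\hat{\mathtt{K}}_1$ one has $w = r$ (because $\bar{w}=1$ in the last blow-up \eqref{eq:blowup_k2_wq}), so the slice $r = 0$ precisely corresponds to $w_3 = 0$ at infinity in $K_3$, and the defining condition $x = -1-\alpha$ with $y$ free is exactly \eqref{eq:L0_K3} after blow-down. For the segment $\gamma^{1,2}$ I would note that the subset $\{r = q = y = 0\}$ is invariant for \eqref{eq:eq_kappa1hat}, with the reduced equation $\dot{x} = -(x + 1 + \alpha)$. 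Hence the orbit issued from the image $(r,x,y,q) = (0,-1,0,0)$ of $Q^1$ under the blow-up contracts exponentially along the $x$-axis to the point $(0, -1-\alpha, 0, 0)$, which lies on $W^c$ and projects down to $Q^2$ given by \eqref{eq:Qa}. This orbit realizes the stable fiber described in \eqref{eq:gamma_1a}.

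The main obstacle is bookkeeping across the sequence of blow-up charts rather than the center-manifold analysis itself: one must check that the identification of $\{r = q = 0,\; x = -1-\alpha\}$ with $L_0$ survives the successive blow-downs of \eqref{eq:blowup_qeps} and \eqref{eq:blowup_k2_wq} and the earlier shift used in chart $\mathcal{K}_1$, and that the attraction of $W^c$ is uniform along the entire (non-compact) line $L$. Both are manageable: the strength of the stable eigenvalue is the constant $-1$ independent of $y_0$, so the standard center manifold reduction can be applied on any compact $y$-interval, and the pieces glue together to give $W^c$ in a neighborhood of all of $L$. Finally, the $y$-interval $y \in (0, 2\alpha/\xi)$ identified in $\gamma^{2,4}$ is then seen to be a flow segment on $W^c$ driven by the leading-order dynamics of $y$ on the center manifold, which will be used in the subsequent analysis.
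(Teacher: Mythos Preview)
Your proposal is correct and follows essentially the same approach as the paper: linearize along the line of equilibria $\{r=q=0,\,x=-1-\alpha\}$, observe the single stable eigenvalue $-1$ in the $x$-direction with a three-dimensional center, invoke the center manifold theorem to obtain \eqref{eq:cm_hatkappa1}, and then read off $\gamma^{1,2}$ as the orbit of $\dot{x}=-(x+1+\alpha)$ on the invariant set $\{r=q=y=0\}$. Your version is more explicit (computing the Jacobian and checking invariance directly), but the argument is the same; one small wording point is that the initial point $(0,-1,0,0)$ in chart $\hat{\mathtt{K}}_1$ is the image of the point \eqref{eq:point_fixed} reached via $\gamma_2^1$, rather than literally the blow-up image of $Q^1$, though upon blow-down the segment does indeed connect $Q^1$ to $Q^2$ as claimed.
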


\begin{proof}   \eqref{eq:eq_kappa1hat} has a line of fixed points for $r=q=0, x=-1-\alpha, y\in\mathbb{R}$. This line corresponds to $L_0$ through the coordinate changes \eqref{eq:kappa2bar}, \eqref{eq:blowup_k2_wq}. The linearized dynamics on $L_0$ is hyperbolic only in the  $x$-direction and furthermore is stable. Therefore  \eqref{eq:cm_hatkappa1} appears for $r,q$ sufficiently small. The point \eqref{eq:point_fixed} in chart $\hat{\mathtt{K}}_1$ becomes:
\be\label{eq:point_fixed_K1}
 (r,x,y,q) = ( 0, -1, 0, 0  ),
 \ee
 hence there is a solution backwards asymptotic to \eqref{eq:point_fixed_K1} and forward asymptotic to  $Q^2\in L_0$ (recall \eqref{eq:Qa}) through a stable fiber. This connection corresponds to $\gamma^{1,2}$.\qed\end{proof}
We insert \eqref{eq:cm_hatkappa1} into \eqref{eq:eq_kappa1hat} to obtain the dynamics on the center manifold. The resulting vector field has a  line of  non-hyperbolic fixed points, corresponding to $L_0$,  for $r=q=0$  since each point has a triple zero eigenvalue. We gain hyperbolicity of this line by introducing the blow-up map:
\be \label{eq:blowup_k2_rq}
r = \rho \sigma , \quad q = \rho ,
\ee 
where $\rho\geq0, \sigma\geq0$. In chart \eqref{eq:blowup_k2_rq} the point  $Q^2$ \eqref{eq:Qa} is blown-up to the $\sigma$-axis $\{y=\rho=0, \sigma\geq 0\}$. Similarly $Q^4$ \eqref{eq:Qb} corresponds to the line $\{y=2\alpha/\xi, \rho=0, \sigma\geq 0\}$. We divide the vector field of chart \eqref{eq:blowup_k2_rq} by the common divisor $\rho$ and obtain:
\be\label{eq:cm_kappa1check}
\begin{aligned}
\dot{\sigma} &= \sigma(-2+\rho \sigma + \rho^2)  \left( y - \frac{\alpha}{\xi} \right)\left( 1 + \Or\left(\rho\right) \right),\\
\dot{y} &= \sigma + y \rho \sigma \left( y - \frac{\alpha}{\xi} \right)\left( 1 + \Or\left(\rho\right) \right),\\
\dot{\rho} &= \rho(2-\rho \sigma) \left( y - \frac{\alpha}{\xi}\right)\left( 1 + \Or\left(\rho\right) \right).
\end{aligned}\ee
Following equations \eqref{eq:point_fixed} and $\gamma_2^1$ we enter  chart \eqref{eq:blowup_k2_rq} with $\sigma=0$ and $y=0$. Subsequently, by following $\gamma^{1,2}$ we have $\rho=0$. In the following we describe the dynamics within $L_0$ and identify $\gamma^{2,4}$ as a heteroclinic orbit. 
\begin{lemma}\label{lem:lineL0_part2}
System \eqref{eq:cm_kappa1check} has two invariant planes for $\rho=0$ and $\sigma=0$. Their intersection $\rho=\sigma=0$ is a line of fixed points. We have:
\begin{itemize}
\item The origin  $(\sigma,y,\rho)=(0,0,0)$ has a strong stable manifold:
\be \label{eq:gamma_1^1}
W^s(0,0,0) := \left\{ (\sigma,y,\rho)\in \mathbb{R}^2 \times\mathbb{R}_{+}  \lvert \quad \sigma=0,   y=0, \rho\geq 0  \right\}.
\ee
\item There exists a heteroclinic connection: 
\be \label{eq:explicit_gamma_ab} \gamma^{2,4} = \left\{ (\sigma,y,\rho)\in \mathbb{R}^2 \times\mathbb{R}_{+}  \lvert \quad \sigma=2\frac{\alpha}{\xi} y - y^2,   y\in(0,2\alpha/\xi), \rho= 0  \right\}. \ee 
 joining $(\sigma,y,\rho)=(0,0,0)$ backwards in time with $(\sigma,y,\rho) = (0,2\alpha/\xi,0)$ forward in time. 
\item The point $(\sigma,y,\rho) = (0,2\alpha/\xi,0)$ has a strong unstable manifold:
\be \label{eq:gamma_1^2}
W^u (0,2\alpha/\xi,0) := \left\{ (\sigma,y,\rho)\in \mathbb{R}^2 \times\mathbb{R}_{+}  \lvert \quad \sigma=0,   y=2\alpha/\xi, \rho\geq 0  \right\}.
\ee
\end{itemize}
%
  \end{lemma}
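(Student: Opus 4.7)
The plan is to verify the three bullets directly from the structure of \eqref{eq:cm_kappa1check}, exploiting the fact that all the nonlinearity is organised by factors of $\sigma$ and $\rho$. I first read off that $\sigma$ and $\rho$ each appear as a leading factor in $\dot\sigma$ and $\dot\rho$ respectively, so the two coordinate planes $\{\sigma=0\}$ and $\{\rho=0\}$ are invariant; on their intersection $\dot y = \sigma = 0$, so the $y$-axis is a line of fixed points. I also note that the $\Or(\rho)$ remainders vanish identically on $\{\rho=0\}$ rather than only to leading order, which will matter in the last step.

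Next I linearise at the two distinguished equilibria. At $(\sigma,y,\rho)=(0,0,0)$ the only nontrivial Jacobian entries come from the leading factors and give $\partial_\sigma\dot\sigma = 2\alpha/\xi$, $\partial_\sigma\dot y = 1$, and $\partial_\rho\dot\rho = -2\alpha/\xi$, hence spectrum $\{2\alpha/\xi,\,0,\,-2\alpha/\xi\}$ with the strong stable eigenvector pointing along the $\rho$-axis. Because both $\sigma=0$ and $y=0$ are preserved along the $\rho$-axis (the second since $\dot y = \sigma = 0$ there), the $\rho$-axis is itself an invariant manifold and therefore \emph{equals} the strong stable manifold, establishing \eqref{eq:gamma_1^1}. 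Repeating the computation at $(0, 2\alpha/\xi, 0)$ flips the signs of the hyperbolic eigenvalues and places the strong unstable eigenvector along $\rho$; the line $\{\sigma=0,\, y=2\alpha/\xi,\, \rho\geq 0\}$ is again invariant by the same argument, giving \eqref{eq:gamma_1^2}.

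Finally I construct $\gamma^{2,4}$ by restricting the flow to the invariant plane $\{\rho=0\}$, on which the system reduces exactly to
\[
\dot\sigma \;=\; -2\sigma\!\left(y - \frac{\alpha}{\xi}\right), \qquad \dot y \;=\; \sigma.
\]
This planar vector field is integrable: dividing gives $d\sigma/dy = -2(y - \alpha/\xi)$, which integrates to the first integral $\sigma + y^2 - 2(\alpha/\xi)\,y = C$, and the level $C=0$ is precisely the parabola in \eqref{eq:explicit_gamma_ab}. On this curve $\sigma > 0$ for $y \in (0, 2\alpha/\xi)$, so $\dot y > 0$ and $y$ increases monotonically from $0$ to $2\alpha/\xi$; the tangent at the initial end aligns with the unstable eigenvector of $(0,0,0)$ and at the final end with the stable eigenvector of $(0, 2\alpha/\xi, 0)$, so the curve is a bona fide heteroclinic orbit.

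The argument is essentially linear algebra plus one planar quadrature, and there is no hard step. The only point that requires care is checking that the $\Or(\rho)$ remainders in \eqref{eq:cm_kappa1check} vanish \emph{identically} on $\{\rho=0\}$, so that the explicit parabola is a true orbit of the full system rather than merely an asymptotic approximation to one.
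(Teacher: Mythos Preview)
Your proof is correct and follows essentially the same route as the paper: both exploit the invariance of the planes $\{\sigma=0\}$ and $\{\rho=0\}$, identify the $\rho$-lines at $y=0$ and $y=2\alpha/\xi$ as the strong stable and unstable manifolds, and then integrate the restricted planar system on $\{\rho=0\}$ to obtain the explicit parabola \eqref{eq:explicit_gamma_ab}. The only cosmetic difference is that the paper desingularises by dividing the planar vector field by $\sigma$ and solves the resulting ODE, whereas you pass directly to the first integral via $d\sigma/dy$; your explicit eigenvalue computation and your remark that the $\Or(\rho)$ remainders vanish identically on $\{\rho=0\}$ are useful additions that the paper leaves implicit.
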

The results of Lemma \ref{lem:lineL0_part2} are summarized in  Figure \ref{fig:center_manifold}.
\begin{remark}
Upon blowing down, the expression in \eqref{eq:explicit_gamma_ab} gives $\gamma^{2,4}$ in  \eqref{eq:gamma_ab}. We use the same symbol in \eqref{eq:explicit_gamma_ab} and \eqref{eq:gamma_ab} for simplicity.
\end{remark} 
\begin{figure}[h!]
\centering
\includegraphics[scale=0.8]{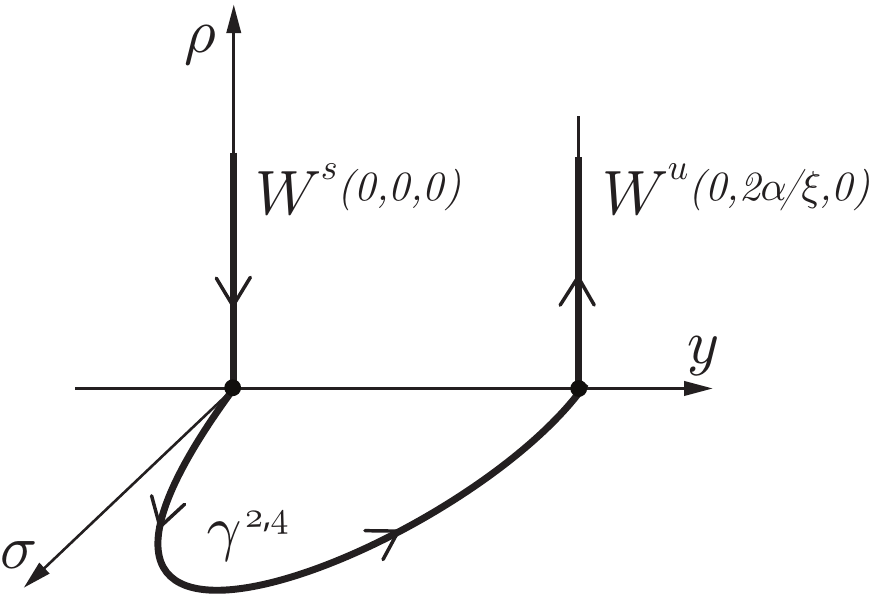}
\caption{Dynamics in chart $\hat{\mathtt{K}}_1$. The plane $\rho=0$ corresponds to the blown-up line $L_0$. Here the singular dynamics contracts to $L_0$ and then is  expelled from it.}\label{fig:center_manifold}
\end{figure} 
\begin{proof}
On the invariant plane $\sigma=0$ we have the following dynamics:
\be\label{eq:cm_kappa1checkr}
\begin{aligned}
\dot{y} &= 0,\\
\dot{\rho} &= 2\rho\left( y - \frac{\alpha}{\xi}\right)\left( 1 + \Or\left(\rho\right) \right).
\end{aligned}\ee
This plane is foliated with invariant lines in the $y$-direction.  The solution  of \eqref{eq:cm_kappa1checkr} with $y=0$ is  \eqref{eq:gamma_1^1} and contracts towards the invariant plane $\rho=0$. Hence this trajectory acts as a strong stable manifold.  We substitute  $\rho=0$ into \eqref{eq:cm_kappa1check} and after dividing by $\sigma$ we obtain the  explicit solution  \eqref{eq:explicit_gamma_ab} given the initial condition in the origin.  This solution is forward asymptotic to 
 $(\sigma,y,\rho) = (0,2\alpha/\xi,0)$.  Eventually  $\rho$ expands on the strong unstable manifold \eqref{eq:gamma_1^2}, that is the solution of   \eqref{eq:cm_kappa1checkr} with $y=2\alpha/\xi$. 
\qed\end{proof}
Using hyperbolic methods we  follow the unstable manifold $W^u(0,2\alpha/\xi,0)$ into chart  $\hat{\mathtt{K}}_2$ where it contracts towards the origin along the invariant plane $\bar{r}=0$. We continue this trajectory by following the unstable manifold of the origin on the  plane $\bar{w}=0$. We continue this manifold into chart $\mathcal{K}_1$, since eventually   chart $\mathcal{K}_2$ is no longer suited  to describe this trajectory.
Here the variable $\epsilon$ decreases exponentially and for $\epsilon=w=0$ we obtain a layer problem. 
 Therefore from $Q^4$ we follow after de-singularization a fast fiber $\gamma^{4,5}$ that corresponds to the solution of  this layer problem   and that contracts to the point $Q^5$ on $C_0$.
Since  $Q^5$ may not be visible in chart ${K}_3$, we compute its coordinates in chart  ${K}_1$.

\subsection{Chart ${K}_1$}\label{sec:barkappa1} 
We insert \eqref{eq:R&S_kappa21_3d} into the fast problem \eqref{eq:fast_problem_2} and divide by $\rme^{z_1/w_1}$ to obtain the de-singularized vector field in chart ${K}_1$. We drop the subscript henceforth for the sake of readability:
\be \label{eq:barkap1_global}
\begin{aligned}
\dot{w} &=  - \vare w^2 (1 - \rme^{-z/w}),\\
\dot{x} &= -\vare \left(x+(1+ \alpha )z\right) - \vare x w (1 - \rme^{-z/w}),\\
\dot{z} &=  - \rme^{-2z/w} \left( 1 + \frac{x+z}{\xi} \right) - \vare z w (1 - \rme^{-z/w}).
\end{aligned}\ee
In chart ${K}_1$ the layer problem is obtained by requiring $\vare=0$ in \eqref{eq:barkap1_global}. Hence the dynamics on the layer problem is only in the $z$-direction and  the fibers are all vertical. In particular the fiber $\gamma^{4,5}$  is written as in \eqref{eq:gamma_bc} since it departs from $K_{31}(Q^4)$.
It follows that   $\gamma^{4,5}$ is forward asymptotic to the point $Q^5$ defined in \eqref{eq:Qc}. The point $Q^5$ is connected to $Q^6$ through the segment $\gamma^{5,6}$, according to the analysis of the reduced problem of section \ref{sec:compact_reduced}.  From the point $Q^6$ the solution is connected to the point $Q^1$ through the manifold $W^{c,u}$. This closes the singular cycle $\Gamma_0$. \\
Figure  \ref{fig:kappa1_new}  illustrates the dynamics in chart ${K}_1$. We remark that the change of coordinates from chart ${K}_3$ to chart ${K}_1$ is defined for $z_1>0$ and therefore when $\alpha>1$ the point $Q^5$ is visible only in chart ${K}_1$. 


\section{Outline of a proof}\label{sec:global_results}
To prove  Conjecture \ref{con:conjecture_thm} we would have to consider a section $\Lambda_1:= \{w_1=\delta\}$ transverse to $W^{c,u} $ where $\delta>0$ is small but fixed. Using the blow-up in chart ${K}_3$ we can track a full neighbourhood $N\subset \Lambda_1$ of $ \Lambda_1 \cap W^{c,u} $ using  Proposition \ref{theo:bifurcation_alpha}, $\gamma^{1,2}, \gamma^{2,4}, \gamma^{4,5}, \gamma^{5,6}$ and $W^{c,u} $ respectively, to obtain a return map $P_1: N \to N$ for $\vare$ sufficiently small. For $\vare = 0$ the forward flow of $N$ contracts to the point $Q^1$. This would provide the desired contraction of $P_1$ and establish, by the contraction mapping theorem, the existence of the limit cycle $\Gamma_\vare$ satisfying $\Gamma_\vare\to\Gamma_0$ for $\vare\to0$.  

\section{Conclusions}\label{sec:conclusion}
We have  considered the one dimensional spring-block model that describes the earthquake faulting phenomenon. We have  used   geometric singular perturbation theory and the blow-up method to provide a detailed description of the periodicity of the earthquake episodes, in particular we have untangled the increase in  amplitude  of the cycles for $\vare\to 0$ and their   relaxation oscillation structure.
We have shown that the limit cycles arise from a degenerate Hopf bifurcation. The degeneracy is due to an underlying Hamiltonian structure that leads to large amplitude oscillations. Using the Poincar\'e compactification together with the blow-up method, we have described how these limit cycles behave near infinity in the limit of $\vare\to0$.  A  full detailed  proof of Conjecture \ref{con:conjecture_thm}, including the required careful estimation of the contraction, will be the subject of a separate manuscript. \\
We have observed that the notation of quasi-static slip motion to define the reduced problem \eqref{eq:reduced_problem} is misleading. Indeed the solutions of \eqref{eq:reduced_problem}  have an intrinsic slow-fast structure  resembling the  stick-slip oscillations. Our analysis also shows that  the periodic solutions of \eqref{eq:3Dproblem} cannot be investigated by studying the so-called quasi-static slip phase and the  stick-slip phase separately, as it is done in  \cite{Ruina1983,gu1984a}, since the two phases are connected by the  non-linear terms of  \eqref{eq:3Dproblem}. We also suggest suitable coordinate sets and time rescales to deal with the stiffness of   \eqref{eq:3Dproblem} during numerical simulations.  We hope that a deeper understanding of the structure of the earthquake cycles may be of help to the temporal predictability of the earthquake episodes.\\
We presuppose that we can apply  some of the ideas in this manuscript  to the study of the 1-dimensional spring-block model with Dieterich state law. Indeed in this new system   the fixed point in the origin behaves like a saddle,  the critical manifold  loses  hyperbolicity  like \eqref{eq:normally_attracting} and solutions reach infinity in finite time for $\vare \to0$. Moreover we think that these ideas  can also be used to study the  continuum formulation of the Burridge and Knopoff model with Ruina state law, in particular to analyse    the self-healing slip pulse solutions \cite{heaton1990a}.  Indeed this latter model has the same difficulties of  \eqref{eq:3Dproblem} in terms of  small parameter and   non-linearities of the vector field \cite{Erickson2011}. We remark that the self-healing slip pulse solutions  are considered to be related to the energy of an earthquake rupture. \\

 
\ack
The first author  thanks Thibault Putelat and Bj\"orn Birnir for the useful discussions. We acknowledge the Idella Foundation for supporting the research. This research was partially done whilst the first author was a visiting Researcher at the Centre de Recerca Matem\`atica in the Intensive Research Program on Advances in Nonsmooth Dynamics.


\section*{References}
\bibliographystyle{jphysicsB}
\bibliography{bibliography/Earthquake_Article}                         
\end{document}